\theoremstyle{plain}
\newtheorem{theorem}{Theorem}
\newtheorem{corollary}[theorem]{Corollary}
\newtheorem{lemma}[theorem]{Lemma}
\newtheorem{proposition}[theorem]{Proposition}
\theoremstyle{definition}
\newtheorem{definition}[theorem]{Definition}
\theoremstyle{remark}
\newtheorem{remark}[theorem]{Remark}
\def\qed{\hfill $\Box$}
\newcommand{\dueto}[1]{\textup{\textbf{(#1) }}}
\newcommand{\nin}{\not\in}
\newcommand{\tmabbr}[1]{#1}
\newcommand{\tmop}[1]{\ensuremath{\operatorname{#1}}}
\newcommand{\tmtextit}[1]{{\itshape{#1}}}
\begin{document}

\title[Local recognition of reflection graphs]{The local recognition of reflection graphs of spherical Coxeter
groups}

\author{Ralf Gramlich}
\thanks{The first author gratefully acknowledges a
Heisenberg Fellowship by the Deutsche Forschungsgemeinschaft.}
\address{Technische Universit\"at Darmstadt, FB Mathematik, Schlo{\ss}gartenstra{\ss}e 7, 64289 Darmstadt, Germany + University of Birmingham, School of Mathematics, Edgbaston, Birmingham, B15 2TT, United Kingdom}
\email{gramlich@mathematik.tu-darmstadt.de + ralfg@maths.bham.ac.uk}

\author{Jonathan I. Hall}
\thanks{The second author gratefully acknowledges 
partial support provided by the National Science Foundation (USA)}
\address{Michigan State University, Department of Mathematics, East Lansing, Michigan 48824, U.S.A.}
\email{jhall@math.msu.edu}

\author{Armin Straub}
\address{Tulane University, Department of Mathematics, 6823 St.\ Charles Ave, New Orleans, LA 70118, U.S.A.}
\email{math@arminstraub.com}

\subjclass[2000]{05C25, 05C75, 20F55, 20E25}

\keywords{Local recognition of graphs, Coxeter groups}

\begin{abstract}
Based on the third author's thesis \cite{straub-diploma} in this article we complete the local recognition of commuting reflection graphs of spherical Coxeter groups arising from irreducible
crystallographic root systems.
\end{abstract}

\maketitle

\section{Introduction}

Given a connected graph one may ask to which extent it is determined by its
local graphs, that is, by the induced subgraphs on the vertices adjacent to a
particular vertex. This local recognition of graphs has been studied
extensively in the literature, for instance in {\cite{buekenhout-gt77}}, \cite{Cuypers/Pasini:1992},
{\cite{hall-gt85b}, \cite{Pasechnik:1994}, \cite{Weetman:1994a}, \cite{Weetman:1994b}} to mention a few; see also \cite{brown-gt75}, \cite{Cohen:1990}, \cite{hall-gt85}. A particularly guiding example for the topic of the
present article is the local recognition of the Kneser graphs studied in
{\cite{hall-gt80}} and {\cite{hall-gt87}}.

We are interested in the local recognition of Weyl graphs, i.e., graphs on the reflections of Coxeter groups with the commutation relation as adjacency. 
A combination of our findings with results from \cite{buekenhout-gt77}, \cite{hall-gt87}, \cite{hall-gt85b} yields the following recognition result.

\medskip
\noindent 
{\bf Main Theorem.} \\
{\em  The following are true up to isomorphism.
  \begin{itemize}
    \item A Weyl graph of type $A_n$ ($n \geqslant 8$), type $B_n, C_n$
    ($n = 3$ or $n \geqslant 5$), type $D_n$ ($n \geqslant 9$), or type
    $E_7$ is uniquely determined, as a connected graph, by its local graphs.

    \item A Weyl graph of type $A_6$, $A_7$, $D_7$, $D_8$, $E_6$, $E_8$ is
    uniquely determined by its local graphs and its size.

    \item The Weyl graph $W$ of type $F_4$ and its twisted copy (defined at the
    end of section \ref{4.1}) are the only bichromatic graphs of
    size $24$ with local graphs like $W$.
  \end{itemize}
}

\medskip
The remaining small Weyl graphs of type $A_n$ as well as those of types $I_2(m)$, $G_2$,
$H_3$, $H_4$ are locally a disjoint union of complete graphs. The graphs of type $D_n$
are obtained as doubles of those of type $A_{n - 1}$, so
that local recognition results for type $A_n$ transfer to $D_n$. Finally,
types $B_4$ and $C_4$ are treated in Remark \ref{rk-locallylikewb4}.

The local recognition of the Weyl graphs of type $A_7$, $E_6$ and $E_8$
has been established in the fundamental work
{\cite{buekenhout-gt77}}. The case of $A_6$, for which the Weyl graph is locally the Petersen graph, has been
studied in {\cite{hall-gt80}}. Weyl graphs of types $A_n$ and $E_n$ which
are locally cotriangular have been treated in {\cite{hall-gt85b}}. The local
recognition of types $B_n$ and $C_n$ is proved in Theorem \ref{thm-lr-recognizewbn}.
The Weyl graph of type $F_4$ is not uniquely determined by its local graphs (Corollary \ref{thm-infinitelymanygraphslocallylikewf4}). We nevertheless characterize this Weyl graph as as one of two tightest graphs with the prescribed local structure (Theorem
\ref{thm-recognizewf4}).
In the last section we turn to group theoretical applications of local
recognition results for Weyl graphs. 

\subsection*{Acknowledgement} The authors thank the referee for several suggestions that helped to improve the exposition of this article.

\section{\label{sct-localrecognition}Local recognition of graphs}

All graphs considered in this text are simple and undirected. We use $\perp$
to denote adjacency, and our notation for operations on graphs like the
cartesian product or joins follows {\cite{harary-gt94}}. Let $\Gamma$ be a
graph, and $x \in \Gamma$ a vertex. We write $x^{\perp}$ to denote the set
of neighbors of $x$, that is, the set of vertices adjacent to $x$. Likewise,
for $X \subseteq \Gamma$ we write $X^{\perp} = \bigcap_{x \in X} x^{\perp}$.
The induced subgraph on $x^{\perp}$ is
called the {\em local graph} at $x$. A graph $\Gamma$ is
said to be {\em locally homogeneous}, if there exists a graph $\Delta$ such that each local graph of $\Gamma$ is isomorphic to $\Delta$. In this case, $\Gamma$
is said to be locally $\Delta$, and $\Delta$ is referred to as the local graph
of $\Gamma$. If $\Gamma$ is locally homogeneous, then we denote its local graph
by $\Delta (\Gamma)$.

In this article we are interested in the problem of characterizing a connected locally
homogeneous graph in terms of its local graph. We say that a connected locally homogeneous graph $\Gamma$ is
{\em locally recognizable}, if up to
isomorphism $\Gamma$ is the only connected graph that is locally $\Delta
(\Gamma)$. In case $\Lambda$ is another locally homogeneous graph such that
$\Delta (\Lambda) \cong \Delta (\Gamma)$ we say that $\Lambda$ is
{\em locally like $\Gamma$}.

The above terminology naturally extends to bichromatic graphs. For reasons that
become clear later, we distinguish the vertices of a bichromatic graph as
{\em short} versus {\em long}. All morphisms between bichromatic graphs
are understood to preserve this distinction. We say that a bichromatic graph is
{\em locally homogeneous}, if the local graphs at short vertices are all
isomorphic to some bichromatic graph $\Delta_s$ and the local graphs at
long vertices are all isomorphic to some bichromatic graph $\Delta_{\ell}$. In
this case we say that $\Delta_s$ is the {\em short local graph}
of $\Gamma$ and that $\Delta_{\ell}$ is the {\em long local
graph} of $\Gamma$. If $\Gamma$ is a bichromatic
locally homogeneous graph, then we denote its short local graph by $\Delta_s
(\Gamma)$ and its long local graph by $\Delta_{\ell} (\Gamma)$. If $\Lambda$ is
another bichromatic locally homogeneous graph such that the short as well as
the long local graphs of $\Lambda$ and $\Gamma$ are isomorphic as bichromatic graphs, then we say
that $\Lambda$ is {\em locally like $\Gamma$}. Finally, given a graph $\Gamma$
we denote with $\Gamma^s$ and $\Gamma^\ell$ the bichromatic graphs obtained from
$\Gamma$ with all vertices treated as short respectively long.

One easily verifies that the Kneser graph $K (n, k)$ is locally homogeneous
with local graph $K (n - k, k)$. The second author proved in
{\cite{hall-gt87}} that for $n$ sufficiently large compared to $k$ the Kneser
graphs are locally recognizable; for $k = 2$, it sufficies to require $n \geq 7$. In {\cite{hall-gt80}} he classified the
three connected graphs which are locally the Petersen graph $K (5, 2)$.
The classification of graphs that are locally $K (6, 2)$ is contained
in {\cite{buekenhout-gt77}}.

\begin{theorem}
  {\dueto{{\cite{hall-gt87}}, {\cite{hall-gt80}}, \cite{buekenhout-gt77}}}
  \label{thm-lr-recognizeknesergraphs}\label{thm-lr-locallyk52}\label{thm-lr-locallyk62}Let
  $k \geqslant 1$, and $\Gamma$ be a connected graph that is locally $K (n,
  k)$.
  \begin{itemize}
    \item If $n \geqslant 3 k + 1$ then $\Gamma \cong K (n + k,
    k)$.
    
    \item If $(n, k) = (5, 2)$ then $\Gamma$ is isomorphic to one of the
    graphs $K (7, 2)$, $3 \cdot K (7, 2)$, or $\Sigma L_{2, 25}$. In
    particular, $| \Gamma | \in \{21, 63, 65\}$.
    
    \item If $(n, k) = (6, 2)$ then $\Gamma$ is isomorphic to one of the
    graphs $K (8, 2)$, $\mathcal{S}p_6 (2)$ minus $\{x\} \cup x^{\perp}$ for
    some $x$, or $\mathcal{N}^-_6 (2)$. In particular, $| \Gamma |
    \in \{28, 32, 36\}$. \qed
  \end{itemize}
\end{theorem}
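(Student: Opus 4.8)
Since the statement collects three results, the plan is to treat the generic range $n \geqslant 3k+1$ and the two exceptional pairs $(n,k)\in\{(5,2),(6,2)\}$ separately; in each case the target is to pin down the global structure of a connected graph $\Gamma$ that is locally $K(n,k)$.

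For $n \geqslant 3k+1$ I would aim to reconstruct from $\Gamma$ a set $\mathcal{P}$ of $n+k$ \emph{points} together with an assignment $v \mapsto P(v)$ of a $k$-subset of $\mathcal{P}$ to each vertex under which adjacency in $\Gamma$ becomes disjointness of $k$-sets; identifying this geometry then yields $\Gamma \cong K(n+k,k)$. The first step is local: for $n > 2k$ the Kneser graph $K(n,k)$ carries a rigid combinatorial geometry --- by (the uniqueness part of) the Erd\H{o}s--Ko--Rado theorem its maximum cocliques are precisely the $n$ ``stars'' through a fixed point, and triangles exist and extend to larger cliques in a controlled fashion --- so each local graph $x^{\perp}$ recovers a point set $S_x$ of size $n$, realized as a distinguished family of maximum cocliques of $x^{\perp}$. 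The second step is to glue: along an edge $\{x,y\}$ the common neighbourhood $\{x,y\}^{\perp} \cong K(n-k,k)$ is connected of bounded diameter (here the hypothesis on $n$ is used), so $S_x$ and $S_y$ can be matched on their overlap and one sees that the vertex $x$ ``is'' the $k$-subset $S_y \setminus S_x$ of $S_y$. The crux is then a simple-connectivity argument: using that $\Gamma$ is connected and that every closed walk is triangulable --- which one reduces to the connectedness and small diameter of the subgraphs $\{x,y\}^{\perp}$ for pairs $x,y$ at distance at most $2$ --- the local point sets glue without monodromy into a single $\mathcal{P}$, producing the required map $v \mapsto P(v)$. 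Recognizing the resulting point-and-$k$-subset geometry completes this case; this is the theorem of \cite{hall-gt87}.

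For $(n,k)=(5,2)$ the local graph is the Petersen graph, and for $(n,k)=(6,2)$ it is $K(6,2)$, the collinearity graph of the generalized quadrangle of $\mathcal{S}p_4(2)$; here the rigidity above no longer forces a single graph, and I would instead run a direct classification. Fixing a vertex, one lists the ways the local graphs at neighbouring vertices may be glued (equivalently, classifies the relevant amalgams), propagates these constraints around the short cycles of $\Gamma$, and enumerates the finitely many connected graphs that arise, matching each against the listed examples. For the Petersen case this produces $K(7,2)$, its triple cover $3\cdot K(7,2)$, and $\Sigma L_{2,25}$ \cite{hall-gt80}; for the $\mathcal{S}p_4(2)$ case it produces $K(8,2)$, the graph $\mathcal{S}p_6(2)$ with a closed neighbourhood deleted, and $\mathcal{N}^-_6(2)$ \cite{buekenhout-gt77}. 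I expect the main obstacle in the generic case to be precisely the simple-connectivity step --- ruling out any obstruction to globalizing the point sets, which is exactly where the bound $n \geqslant 3k+1$ enters and below which the sporadic covers and subgraphs above genuinely occur --- while in the exceptional cases the difficulty is the bookkeeping of the amalgam-and-cycle analysis, which is why those cases were carried out as separate (in places computer-assisted) classifications.
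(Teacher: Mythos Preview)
The paper gives no proof of this theorem: the statement ends with a \qed\ and the result is simply quoted from the cited references \cite{hall-gt87}, \cite{hall-gt80}, \cite{buekenhout-gt77}, so there is nothing in the paper itself to compare your argument against. Your sketch is a broadly accurate high-level outline of what those sources actually do --- reconstructing a global point set via Erd\H{o}s--Ko--Rado and a gluing/simple-connectivity argument in the generic range (Hall, \cite{hall-gt87}), and separate finite classifications in the two exceptional cases --- with the minor caveat that the $(6,2)$ case in \cite{buekenhout-gt77} is handled via the theory of locally polar spaces (since $K(6,2)$ is the point graph of the generalized quadrangle $\mathcal{S}p_4(2)$) rather than by an ad hoc amalgam enumeration, and neither exceptional classification is computer-assisted.
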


Here, the graph $3 \cdot K (7, 2)$ is the $3$-fold cover of $K (7, 2)$, and
$\Sigma L_{2, 25}$ is the graph on the conjugates of the unique non-trivial
field automorphism of $\mathbbm{F}_{25}$ in the special semilinear
group $\Sigma L (2, 25)$ with two elements
adjacent whenever they commute. More details can be found in
{\cite{hall-gt80}}. Further, the graph $\mathcal{S}p_{2 n} (2)$ is the graph on
the non-zero vectors of $V = \mathbbm{F}_2^{2 n}$ with two vectors adjacent
whenever they are perpendicular with respect to a non-degenerate symplectic
form $B$ on $V$. Up to isomorphism there are only two quadratic forms $Q^+$
and $Q^-$, corresponding to maximal or minimal Witt index, on $V$ that $B$ is
associated to, and the graph $\mathcal{N}^{\varepsilon}_{2 n} (2)$
is the induced subgraph of $\mathcal{S}p_{2 n} (2)$ on the
vectors that are non-singular under $Q^{\varepsilon}$. For more details about
these graphs we refer to {\cite{hall-gt85b}}.

Ernest E{\tmabbr{.}} Shult and the second author actually proved a lot more in
{\cite{hall-gt85b}}. They characterize the graphs that are locally
cotriangular in the following sense. A graph is said to be
{\em cotriangular}, if every pair $x, y$ of
non-adjacent vertices is contained in a cotriangle, that is, a $3$-coclique
$\{x, y, z\}$ such that every other vertex is adjacent to either all or
exactly one of the vertices $x, y, z$. Observe that a join
$\Gamma + \Lambda$ is cotriangular if and only if both $\Gamma$ and $\Lambda$
are. Denote with $\Gamma^{\ast}$ the {\em reduced graph}
of $\Gamma$, that is, the graph on the equivalence classes of vertices
of $\Gamma$ with the same closed neighborhood and two classes adjacent
whenever some representatives are adjacent. Then $\Gamma$ is cotriangular if
and only if $\Gamma^{\ast}$ is. A graph $\Gamma$ is called completely reduced
in this context whenever $\Gamma^{\ast} = \Gamma$ and $\Gamma$ can not be
decomposed into $\Gamma_1 + \Gamma_2$ with non-empty $\Gamma_1, \Gamma_2$. A
classification of all cotriangular graphs is given by the following theorem
due to Ernest E{\tmabbr{.}} Shult.

\begin{theorem}
  {\dueto{{\cite{shult-gt74}}}}A finite completely reduced
  graph is cotriangular if and only if it is isomorphic to one of the graphs
  \[ K (n, 2), \hspace{1em} n \geqslant 2 ; \hspace{1em} \mathcal{S}p_{2 n} (2),
     \hspace{1em} n \geqslant 2 ; \hspace{1em}
     \mathcal{N}^{\varepsilon}_{2 n} (2), \hspace{1em} \varepsilon =
     \pm 1, n \geqslant 3.  \] \qed
\end{theorem}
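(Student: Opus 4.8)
The plan is to treat the two implications separately; the forward implication is a routine verification and the converse is the substance. For the ``if'' direction one checks directly that each graph in the list is completely reduced and cotriangular. In $\mathcal{S}p_{2n}(2)$ a cotriangle through two non-perpendicular vectors $u,v$ is $\{u,v,u+v\}$, and for any further non-zero vector $w$ the three scalars $B(w,u)$, $B(w,v)$, $B(w,u+v)$ sum to zero, so $w$ is perpendicular to none or to two of $u,v,u+v$; the polar form of $Q^{\varepsilon}$ gives the same computation for $\mathcal{N}^{\varepsilon}_{2n}(2)$, while in $K(n,2)$ the cotriangle through two intersecting pairs $\{a,b\}$, $\{a,c\}$ is $\{\{a,b\},\{a,c\},\{b,c\}\}$ and the ``all or exactly one'' condition is immediate from $|\{d,e\}\cap\{a,b,c\}|\in\{0,1,2\}$. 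Reducedness and indecomposability are equally elementary, and the stated ranges of $n$ are exactly those avoiding low-rank degeneracies and redundancies --- for instance $\mathcal{N}^{+}_{4}(2)\cong K_{3,3}$ is a join, while $K(5,2)\cong\mathcal{N}^{-}_{4}(2)$, $K(6,2)\cong\mathcal{S}p_{4}(2)$ and $K(8,2)\cong\mathcal{N}^{+}_{6}(2)$ reflect the exceptional isomorphisms $O^{-}_{4}(2)\cong S_{5}$, $Sp_{4}(2)\cong S_{6}$, $O^{+}_{6}(2)\cong S_{8}$.

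For the converse, let $\Gamma$ be finite, completely reduced and cotriangular. The first step is to show that the cotriangle through a non-adjacent pair $x,y$ is \emph{unique}: given two such cotriangles $\{x,y,z\}$ and $\{x,y,z'\}$, running the ``all or exactly one'' axiom (for both cotriangles) through each remaining vertex forces $z\perp z'$ and shows that $z$ and $z'$ have the same closed neighbourhood, contradicting reducedness. Uniqueness lets me define, for each vertex $p$, the map $\sigma_{p}$ that fixes $p$ and every neighbour of $p$ and interchanges each non-neighbour $q$ with its cotriangle-partner. From the axioms one checks that $\sigma_{p}$ is an automorphism of $\Gamma$ with $\sigma_{p}^{2}=1$, that $\sigma_{p}$ and $\sigma_{q}$ commute when $p\perp q$ but generate a copy of $S_{3}$ (with third transposition $\sigma_{r}$, where $\{p,q,r\}$ is the cotriangle) when $p\not\perp q$, and that $\sigma_{p}\sigma_{q}\sigma_{p}=\sigma_{\sigma_{p}(q)}$ in all cases. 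Hence $D=\{\sigma_{p}:p\in\Gamma\}$ is a conjugation-closed set of $3$-transpositions generating a group $G$, $\Gamma$ is isomorphic to its commuting graph, and $p\mapsto\sigma_{p}$ is a bijection (again by reducedness).

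The core of the proof is to show that the cotriangular hypothesis pins $(G,D)$ down. I would do this by constructing an $\mathbbm{F}_{2}$-model: let $V$ be the $\mathbbm{F}_{2}$-vector space on symbols $v_{p}$ $(p\in\Gamma)$ modulo the relations $v_{p}+v_{q}+v_{r}=0$ for every cotriangle $\{p,q,r\}$, and define $B(\bar v_{p},\bar v_{q})$ to be $0$ if $p\perp q$ and $1$ otherwise. The essential point is that $B$ is well defined \emph{precisely because} ``every vertex is non-adjacent to an even number of the vertices of any cotriangle'' is the cotriangular axiom; one then verifies that $B$ is an alternating (possibly degenerate) bilinear form, that $p\mapsto\bar v_{p}$ is injective with non-zero, spanning image, and that $\{\bar v_{p}\}$ is --- inside the non-degenerate symplectic space $V/\mathrm{rad}(B)$ --- one of three recognisable configurations: all non-zero vectors (yielding $\mathcal{S}p_{2n}(2)$), the non-singular vectors of a quadratic form with polar form $B$ (yielding $\mathcal{N}^{\varepsilon}_{2n}(2)$), or the weight-two vectors of a permutation module (yielding $K(n,2)$). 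Matching reducedness and indecomposability against these three alternatives, and resolving the low-rank coincidences above, produces exactly the listed graphs. Alternatively, once $(G,D)$ has been recognised as a $3$-transposition group one may quote Fischer's classification and observe that the cotriangular axiom excludes the unitary groups $U_{n}(2)$ and the three sporadic Fischer groups --- in each of these some transposition commutes with none of an $S_{3}$-triple --- as well as any non-trivial centre, leaving only $S_{n}$, $Sp_{2n}(2)$ and $O^{\varepsilon}_{2n}(2)$.

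I expect the main obstacle to be the passage just sketched, from the purely local ``one or all'' condition to a global linear model: proving that the defining relations of $V$ remain consistent enough for $p\mapsto\bar v_{p}$ to stay injective --- this is where ``completely reduced'' is genuinely used --- and that its image is as large as claimed rather than some thin or exotic subconfiguration. This is in effect the reconstruction of a polar space from its ``one-or-all'' axiom, which is why the statement is a genuine theorem and not a routine check.
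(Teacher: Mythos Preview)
The paper does not prove this statement. It is quoted verbatim from Shult's 1974 paper and closed with a bare \verb|\qed|; the authors use it only as background for Theorem~\ref{thm-lr-locallycotriangular}. There is therefore no proof in the paper to compare your proposal against.

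That said, your sketch is essentially the standard route to Shult's theorem and is sound at the level of an outline. The uniqueness of the cotriangle through a non-adjacent pair does follow exactly as you indicate (running each cotriangle's axiom on the third vertex of the other forces $z\perp z'$ and then $w\perp z\Leftrightarrow w\perp z'$ for every $w$, contradicting reducedness), and the involutions $\sigma_p$ do form a class of $3$-transpositions with $\Gamma$ as their commuting graph. Your two alternative finishes --- building the $\mathbbm{F}_2$-symplectic model directly, or invoking Fischer's classification and eliminating the unitary and sporadic cases --- are both legitimate; Shult's original argument is closer to the first, since it predates a clean published form of Fischer's theorem. The genuine work, as you correctly flag, is the injectivity of $p\mapsto \bar v_p$ and the identification of the image inside $V/\mathrm{rad}(B)$; your proposal names this but does not carry it out, so as written it is a correct plan rather than a proof.
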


The graphs $K (2, 2) \cong K_1$ and $K (3, 2) \cong \overline{K_3}$ are
considered degenerate. Let $\mathcal{D}$ denote the set of graphs $\Gamma$
such that $\Gamma^{\ast}$ is a finite completely reduced cotriangular graph.
If $\mathcal{G}$ is a collection of graphs, then we say that a graph $\Gamma$
is locally $\mathcal{G}$ if for each $x \in \Gamma$ the local graph at $x$ is
isomorphic to some graph of $\mathcal{G}$.

\begin{theorem}
  {\dueto{{\cite[Main
  Theorem]{hall-gt85b}}}}\label{thm-lr-locallycotriangular}Let $\Gamma$ be
  connected and locally $\mathcal{D}$. Then either $\Gamma$ is locally $\{K_1,
  \overline{K_3} \}$ or $\Gamma$ is isomorphic to one of the following graphs
  \begin{itemize}
    \item $K (n, 2)$ where $n \geqslant 7$,
    
    \item $\mathcal{S}p_{2 n} (2)$ possibly with a polar subspace deleted,
    
    \item $\mathcal{H}_{2 n}^{\varepsilon} (T)$, $\mathcal{G}_{2
    n}^{\varepsilon}$,
    
    \item $3 \cdot K (7, 2)$, $\Sigma L_{2, 25}$, or $\mathcal{N}_6^+ (3)$.
  \end{itemize}
\qed\end{theorem}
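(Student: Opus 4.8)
\medskip
\noindent\textbf{Proof plan.}
The strategy is to reduce to the case that $\Gamma$ is connected, completely reduced and not a proper join, then to extract from the local graphs a canonical family of $3$-cocliques turning $\Gamma$ into a cotriangular-type geometry, and finally to run a local-to-global analysis with Shult's classification as the supply of ``base'' graphs. For the reduction, pass to $\Gamma^{\ast}$, noting that $\Gamma$ is recovered from $\Gamma^{\ast}$ together with the multiplicities of the closed-neighbourhood classes and that being locally $\mathcal D$ is inherited (a local graph of $\Gamma^{\ast}$ reduces to a local graph of $\Gamma$, hence lies in $\mathcal D$). If $\Gamma^{\ast}=\Gamma_1+\Gamma_2$ is a proper join, every local graph splits compatibly; since by Shult's theorem each member of $\mathcal D$ reduces to a join of completely reduced cotriangular graphs or is of $\{K_1,\overline{K_3}\}$ type, I would peel off join factors one at a time. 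Restoring the multiplicities and join factors at the end will produce the general members of the families in the conclusion and carve off the alternative ``$\Gamma$ is locally $\{K_1,\overline{K_3}\}$,'' so one may assume $\Gamma$ connected, completely reduced, join-indecomposable, with non-degenerate local graph.

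For the geometry, observe that for non-adjacent $x,y$ with a common neighbour $z$ the pair lies in a cotriangle $\{x,y,w\}$ of the local graph $z^{\perp}\in\mathcal D$; I would first check that $w$ is unique and independent of $z$, giving $\Gamma$ a canonical line set $\mathcal L$ of $3$-cocliques. The point is then that, from the vantage of every vertex $v$, the pair $(v^{\perp},\mathcal L|_{v^{\perp}})$ is a genuine finite completely reduced cotriangular space, hence by Shult's theorem embeds into one of $K(n,2)$, $\mathcal{S}p_{2n}(2)$, $\mathcal{N}^{\varepsilon}_{2n}(2)$. The subtlety --- and the reason $\Gamma$ itself need not be cotriangular --- is that a line of $v^{\perp}$ need not be a cotriangle of all of $\Gamma$: the ``all-or-one'' condition can fail globally, and it is exactly the tracking of this failure along paths that produces covers.

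For the local-to-global step, fix the isomorphism type $\Delta$ of the local graph (one of finitely many shapes of members of $\mathcal D$ compatible with the above). Reading off $\Delta$ determines, via the structure theory of the cotriangular graphs on Shult's list, a ``model'' graph $\Gamma_0$ that is locally $\Delta$ --- $K(n,2)$, $\mathcal{S}p_{2n}(2)$ with a polar subspace deleted, $\mathcal{H}_{2n}^{\varepsilon}(T)$, or $\mathcal{G}_{2n}^{\varepsilon}$ --- and a covering-type morphism relating $\Gamma$ to $\Gamma_0$, the obstruction to $\Gamma\cong\Gamma_0$ being a suitable fundamental-group / sign invariant. I would then show this invariant vanishes once the rank is large enough, and handle the remaining small ranks by an explicit finite analysis; this is where Theorem \ref{thm-lr-recognizeknesergraphs} and a short residual case-check enter, producing the genuine cover $3\cdot K(7,2)$ and the sporadic graphs $\Sigma L_{2,25}$ and $\mathcal{N}_6^+(3)$. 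Reassembling via the reduction step gives exactly the stated list. I expect this last step --- simultaneously bounding the cover obstruction in general and not overlooking a small exotic example --- to be the main obstacle.
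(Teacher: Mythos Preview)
The paper does not prove this theorem at all: it is quoted verbatim as the Main Theorem of Hall--Shult \cite{hall-gt85b} and closed with a bare \qed. There is therefore no ``paper's own proof'' to compare your proposal against; the result is used as a black box, and the paper's original contributions lie elsewhere (Theorems~\ref{thm-lr-recognizewbn} and~\ref{thm-recognizewf4}).

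As for the outline itself, it is a plausible high-level sketch of the strategy in \cite{hall-gt85b}---reduce to the completely reduced join-indecomposable case, use Shult's classification locally, and then control a covering obstruction---but several steps you flag as routine are in fact the substance of that paper. In particular, the claim that the cotriangle through a non-adjacent pair $x,y$ inside some $z^{\perp}$ is unique and independent of $z$ is not automatic and requires real work; likewise, the assertion that the local isomorphism type alone pins down a single model graph $\Gamma_0$ glosses over the whole family of $\mathcal{S}p_{2n}(2)$-with-polar-subspace-deleted and the $\mathcal{H}_{2n}^{\varepsilon}(T)$, $\mathcal{G}_{2n}^{\varepsilon}$ constructions, which are distinguished by global rather than purely local data. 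Your ``covering / sign invariant'' language is morally right, but turning it into an actual proof is essentially the content of \cite{hall-gt85b}, not a short lemma. If your goal is to supply a self-contained argument here, you would in effect be rewriting that paper; if your goal is to match the present paper, the correct move is simply to cite \cite{hall-gt85b} and move on.
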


The graphs $\mathcal{H}_{2 n}^{\varepsilon} (T)$, $\mathcal{G}_{2
n}^{\varepsilon}$ are derived from the graph $\mathcal{S}p_{2 n} (2)$; see {\cite{hall-gt85b}}. Note that the case $k = 2$ of Theorem
\ref{thm-lr-recognizeknesergraphs} can be regarded as a special case of the
classification in Theorem \ref{thm-lr-locallycotriangular}. The
following special case of Theorem \ref{thm-lr-locallycotriangular} has
already been established in \cite{buekenhout-gt77} by Francis Buekenhout and 
Xavier Hubaut. 

\begin{theorem}
  {\dueto{{\cite[Theorem 2 (3)]{buekenhout-gt77}}}}\label{thm-lr-locallysymplectic}Let
  $\Gamma$ be connected and locally $\mathcal{S}p_{2 n} (2)$ for some $n
  \geqslant 2$. Then $\Gamma$ is isomorphic to one of the following graphs
  $\mathcal{N}^+_{2 n + 2} (2)$, $\mathcal{N}^-_{2 n + 2} (2)$, or
  $\mathcal{S}p_{2 n + 2} (2)$ minus $\{x\} \cup x^{\perp}$ for some
  $x$.
\qed\end{theorem}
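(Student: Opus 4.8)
The plan is to recognise $\Gamma$ as a connected, locally cotriangular graph and then to read the answer off Theorem~\ref{thm-lr-locallycotriangular}. The first step is to check that $\mathcal{S}p_{2n}(2)$ belongs to the class $\mathcal{D}$. It is finite and reduced, since distinct non-zero vectors $u\neq v$ of $\mathbb{F}_2^{2n}$ have distinct orthogonal hyperplanes $u^{\perp_B}\neq v^{\perp_B}$ with respect to the symplectic form $B$, hence distinct closed neighbourhoods. It is not a join: a join decomposition would force the non-zero vectors to split as $(U_1\setminus 0)\cup(U_2\setminus 0)$ for a direct sum $\mathbb{F}_2^{2n}=U_1\oplus U_2$ with $U_2=U_1^{\perp_B}$, but then $u+v$ with $0\neq u\in U_1$ and $0\neq v\in U_2$ lies in neither part. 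And it is cotriangular: for non-adjacent vertices $u,v$, that is $B(u,v)=1$, the triple $\{u,v,u+v\}$ is a $3$-coclique, and from $B(z,u+v)=B(z,u)+B(z,v)$ one sees that every other vertex $z$ is adjacent to all three of them or to exactly one. Hence $\mathcal{S}p_{2n}(2)^{\ast}=\mathcal{S}p_{2n}(2)\in\mathcal{D}$, so a connected graph $\Gamma$ that is locally $\mathcal{S}p_{2n}(2)$ is connected and locally $\mathcal{D}$, and Theorem~\ref{thm-lr-locallycotriangular} applies.

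It remains to sift the list of Theorem~\ref{thm-lr-locallycotriangular}, keeping precisely the graphs whose local graph is isomorphic to $\mathcal{S}p_{2n}(2)$. The alternative ``$\Gamma$ locally $\{K_1,\overline{K_3}\}$'' is impossible because $|\mathcal{S}p_{2n}(2)|=2^{2n}-1>3$ for $n\geq 2$, and the sporadic graphs $3\cdot K(7,2)$, $\Sigma L_{2,25}$, $\mathcal{N}^+_6(3)$ drop out at once because their local graphs (the Petersen graph, respectively a small graph over $\mathbb{F}_3$) are not of the form $\mathcal{S}p_{2n}(2)$. For the remaining families it is useful that the local graph of a solution must itself be reduced. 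This excludes $\mathcal{S}p_{2m}(2)$ itself, whose neighbourhood of a vertex $x$ is the non-reduced blow-up of $\mathcal{S}p_{2m-2}(2)$ obtained by replacing each $\bar w$ with the adjacent pair $w,w+x$; among the symplectic graphs with a polar subspace removed it leaves only the removal of $\{x\}\cup x^{\perp}$ for a single vertex $x$, where the neighbourhood of a vertex $v$ is the graph on the non-zero vectors of the non-degenerate symplectic space $x^{\perp_B}\cap v^{\perp_B}$, so this is locally $\mathcal{S}p_{2n}(2)$ exactly when $m=n+1$, producing $\mathcal{S}p_{2n+2}(2)$ minus $\{x\}\cup x^{\perp}$. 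A Kneser graph $K(m,2)$ is locally $K(m-2,2)$, and a comparison of strongly regular parameters gives $K(m-2,2)\cong\mathcal{S}p_{2n}(2)$ only when $(m,n)=(8,2)$, using $K(6,2)\cong\mathcal{S}p_4(2)$ (both being the collinearity graph of the generalized quadrangle $\mathrm{GQ}(2,2)$); but then $K(8,2)\cong\mathcal{N}^+_6(2)$, so nothing new arises. Finally, among the graphs $\mathcal{H}^{\varepsilon}_{2m}(T)$, $\mathcal{G}^{\varepsilon}_{2m}$ derived from $\mathcal{S}p_{2m}(2)$, the members that are locally $\mathcal{S}p_{2n}(2)$ are exactly the non-singular-vector graphs $\mathcal{N}^{\pm}_{2n+2}(2)$: for a quadratic form $Q^{\varepsilon}$ on $\mathbb{F}_2^{2n+2}$ with polar form $B$, the neighbourhood of a non-singular vector $x$ is $\mathcal{S}p_{2n}(2)$, realised on the symplectic space $x^{\perp_B}/\langle x\rangle$. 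Collecting the survivors yields $\Gamma\cong\mathcal{N}^+_{2n+2}(2)$, $\mathcal{N}^-_{2n+2}(2)$, or $\mathcal{S}p_{2n+2}(2)$ minus $\{x\}\cup x^{\perp}$.

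The substantive work is the book-keeping in the second step: the correct identification of the graphs $\mathcal{N}^{\pm}_{2n+2}(2)$ within the families $\mathcal{H}^{\varepsilon}_{2m}(T)$, $\mathcal{G}^{\varepsilon}_{2m}$ as these are defined in \cite{hall-gt85b}, the elimination of every other member of those families (and of the remaining polar-subspace deletions), and the spotting of the low-dimensional coincidences $K(6,2)\cong\mathcal{S}p_4(2)$ and $K(8,2)\cong\mathcal{N}^+_6(2)$, which are exactly what is needed so that the case $n=2$ agrees with the classification of graphs locally $K(6,2)$ contained in Theorem~\ref{thm-lr-recognizeknesergraphs}. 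A self-contained alternative, essentially the original argument of Buekenhout and Hubaut and independent of Theorem~\ref{thm-lr-locallycotriangular}, is to prove directly that $\Gamma$ is cotriangular and then invoke Shult's classification of cotriangular graphs \cite{shult-gt74}: one checks that $\Gamma$ has diameter $2$ and that any two non-adjacent vertices have a common neighbour $x$, inside whose local graph $\cong\mathcal{S}p_{2n}(2)$ they lie on a cotriangle by the computation of the first paragraph, and then one uses connectedness of $\Gamma$ to propagate the cotriangle condition from the local graphs to all of $\Gamma$. In that variant the propagation step is the crux, after which the same sifting of Shult's list completes the proof.
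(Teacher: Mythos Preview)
The paper does not give its own proof of this statement: it is quoted from \cite{buekenhout-gt77} with a \qed, and the surrounding text merely remarks that it ``can be regarded as a special case of'' Theorem~\ref{thm-lr-locallycotriangular}. Your proposal carries out precisely that derivation, so the approach matches what the paper indicates.

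On the execution there are two small points worth tightening. First, your description of the local graph in $\mathcal{S}p_{2n+2}(2)\setminus(\{x\}\cup x^{\perp})$ is not quite literal: the neighbours of $v$ are not the non-zero vectors of $U=x^{\perp_B}\cap v^{\perp_B}$ themselves but the coset $v+U\setminus\{v\}$; the identification $w\mapsto w+v$ is what realises the local graph as $\mathcal{S}p(U)\cong\mathcal{S}p_{2n}(2)$. Second, as you yourself flag, the elimination of the remaining members of the families $\mathcal{H}^{\varepsilon}_{2m}(T)$, $\mathcal{G}^{\varepsilon}_{2m}$ and of the other polar-subspace deletions is asserted rather than argued; this is where the real work lies if one wants a self-contained deduction from Theorem~\ref{thm-lr-locallycotriangular}, and it requires going back to the definitions in \cite{hall-gt85b}. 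Your closing sketch of the direct Buekenhout--Hubaut route is less solid: the claim that $\Gamma$ has diameter~$2$ is not obvious a priori and is in fact part of what their argument establishes, so that paragraph should not be read as an independent proof.
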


The preceding theorem has been generalized in \cite{Cohen/Shult:1990}, \cite{Cuypers/Pasini:1992}.

\section{\label{sct-recognizeweylgraphs}Local recognition of Weyl graphs}

We assume that the reader is familiar with Coxeter groups and root systems as treated in {\cite{humphreys-gr92}} or
{\cite{bourbaki-lie4}}. The {\em commuting graph}
of a group $G$ on $X \subseteq G$ is the graph with vertex set $X$ in which two
vertices $g, h \in X$ are adjacent whenever $g$ and $h$ commute. We will study
the commuting graphs of finite Coxeter groups on their reflections. Since we
are interested in local recognition results we will focus on finite
irreducible Coxeter groups for which the reflection graph is locally
homogeneous. The graphs arising from the cases $H_3$, $H_4$ and $I_2 (m)$ are
locally disjoint unions of complete graphs and therefore not interesting for the
purpose of local recognition.
Hence, we further restrict to Coxeter groups which arise from irreducible
crystallographic root systems. These are those with Dynkin diagram equal to
one of $A_n$ ($n \geqslant 1$), $B_n$ or $C_n$ ($n \geqslant 2$), $D_n$
($n \geqslant 4$), $E_6$, $E_7$, $E_8$, $F_4$, or $G_2$.

Recall that each root of an irreducible crystallographic root system $\Phi$
is considered either short or long (with the convention
that in the absence of two distinct root lengths every root is long). If $M$ is
the Dynkin diagram of $\Phi$ then we denote with $W(M)$ the Weyl group of $\Phi$,
{\tmabbr{i.e.},} the group generated by the reflections through the roots of
$\Phi$, together with the notion of a short (respectively long) root reflection
by $W(M)$. The {\em Weyl graph} $\mathbbm{W}(M)$ is the commuting graph of
$W(M)$ on its reflections. If $M$ is simply laced then all reflections in
$W(M)$ are conjugate, which implies that the Weyl graph $\mathbbm{W}(M)$ is
locally homogeneous. On the other hand, if $M$ is not simply laced then there
are two conjugacy classes of reflections in $W(M)$, namely short and long root
reflections, and we regard $\mathbbm{W}(M)$ as a bichromatic graph. Instead of
assigning arbitrary colors we accordingly refer to the vertices of $\mathbbm{W}(M)$
corresponding to short (respectively long) root reflections as short (respectively
long) vertices. As a bichromatic graph, the Weyl graph $\mathbbm{W}(M)$ is
locally homogeneous.

$\mathbbm{W}(A_n)$ is the graph with vertices $y_{i, j}$, $1 \leqslant i < j
\leqslant n + 1$, such that $y_{i, j} \perp y_{k, l}$ if and only if $\{i,
j\} \cap \{k, l\}= \emptyset$. Consequently, the Weyl graph
$\mathbbm{W}(A_n)$ is isomorphic to the Kneser graph $K (n + 1, 2)$. Likewise,
$\mathbbm{W}(D_n)$ is the graph with vertices $y_{i, j}$, $1 \leqslant i,
j \leqslant n$, such that $y_{i, j} \perp y_{k, l}$ if and only if $\{i, j\}
\cap \{k, l\}= \emptyset$ or $(k, l) = (j, i)$. $\mathbbm{W}(D_n)$ is therefore
isomorphic to the composition graph $K (n, 2) [K_2]$, that is,
the graph arising from the Kneser graph $K (n, 2)$ by replacing each vertex
by an adjacent pair of vertices. Accordingly, Theorem \ref{thm-lr-recognizeknesergraphs}
applies and yields the recognition results of the Main Theorem
for types $A_n$ and $D_n$.
By \cite{buekenhout-gt77} we have
$\mathbbm{W}(E_6) \cong \mathcal{N}^-_6 (2)$, $\mathbbm{W}(E_7) \cong \mathcal{S}p_6 (2)$
and $\mathbbm{W}(E_8) \cong \mathcal{N}^+_8 (2)$. The corresponding recognition
results of the Main Theorem follow from Theorems \ref{thm-lr-locallyk62},
\ref{thm-lr-locallycotriangular} and \ref{thm-lr-locallysymplectic}.

$\mathbbm{W}(B_n)$ is the bichromatic graph with vertices $y_{i,
j}$, $1 \leqslant i, j \leqslant n$, where the $y_{i, i}$ are short and the
$y_{i, j}$ with $i \neq j$ are long vertices, and $y_{i, j} \perp y_{k, l}$
if and only if $\{i, j\} \cap \{k, l\}= \emptyset$ or $(k, l) = (j, i)$. The
Weyl graph $\mathbbm{W}(C_n)$ is obtained from $\mathbbm{W}(B_n)$ by
exchanging the role of short and long vertices. The recognition results of
the Main Theorem for types $B_n$ and $C_n$ are therefore contained
in the following theorem.

\begin{theorem}
  \label{thm-lr-recognizewbn}Let $n = 3$ or $n \geqslant 5$, and let $\Gamma$
  be a connected bichromatic graph which is locally like $\mathbbm{W}(B_n)$.
  Then $\Gamma \cong \mathbbm{W}(B_n)$.
\end{theorem}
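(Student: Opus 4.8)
The plan is to construct a copy of $\mathbbm{W}(B_n)$ inside $\Gamma$ by ``closing up'' from a single short vertex; this route deliberately avoids any appeal to the recognition of $\mathbbm{W}(D_n)\cong K(n,2)[K_2]$, which would in any case be unavailable for $n\in\{5,6\}$. First I would record the features of $\mathbbm{W}(B_m)$ (to be used below for $m=n,n-1,n-2$) that the argument needs: the short vertices $y_{i,i}$ induce a complete graph $K_m$; distinct short vertices have distinct neighborhoods; the open neighborhood $\{y_{k,l}:j\nin\{k,l\}\}$ of a short vertex $y_{j,j}$ is the subgraph $\mathbbm{W}(B_{m-1})$ spanned by the indices $\{1,\dots,m\}\setminus\{j\}$; a short vertex $y_{i,i}$ is non-adjacent to the two long vertices $y_{i,j},y_{j,i}$; $\{y_{i,j},y_{j,i}\}$ is the unique pairing of long vertices with a common closed neighborhood; and $|\mathbbm{W}(B_m)|=m^2$. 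I would also note that in $\Gamma$ a short vertex has local graph $\cong\mathbbm{W}(B_{n-1})$ and a long vertex has local graph $\cong K_1^\ell+\mathbbm{W}(B_{n-2})$, that $\Gamma$ has vertices of both colors (each of $\Delta_s,\Delta_\ell$ does), and that the coloring of $\Gamma$ is forced because $\Delta_s\not\cong\Delta_\ell$ --- already their sizes $(n-1)^2$ and $1+(n-2)^2$ differ for $n\geq 3$.

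Now fix a short vertex $x_0\in\Gamma$ and an isomorphism $x_0^\perp\cong\mathbbm{W}(B_{n-1})$ realizing the index set $\{2,\dots,n\}$, so that $x_0=y_{1,1}$ and $x_0^\perp=\{y_{i,j}:2\leq i,j\leq n\}$. Put $N:=\Gamma\setminus(\{x_0\}\cup x_0^\perp)$ and, for $z\in N$, set $\theta(z):=z^\perp\cap x_0^\perp$. The crux is the following. \emph{Claim.} For every $z\in N$ with $\theta(z)\neq\emptyset$, the set $\theta(z)$ is the open neighborhood in $x_0^\perp$ of a uniquely determined short vertex $s(z)$ of $x_0^\perp$; for each short vertex $s$ of $x_0^\perp$ exactly two members $z$ of $N$ satisfy $s(z)=s$, and these two are adjacent; and two members $z,z'$ of $N$ with $s(z)\neq s(z')$ are non-adjacent.

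Granting the Claim, I would label, for each short $s=y_{j,j}$ of $x_0^\perp$, its two new partners $y_{1,j}$ and $y_{j,1}$ (in either order). Since $\theta(y_{1,j})=\theta(y_{j,1})$ is the open neighborhood of $y_{j,j}$, namely $\{y_{k,l}\in x_0^\perp:j\nin\{k,l\}\}$, all adjacencies of the induced subgraph $M$ on $\{x_0\}\cup x_0^\perp\cup N$ agree with the defining rule of $\mathbbm{W}(B_n)$, so $M\cong\mathbbm{W}(B_n)$ with short vertices $x_0$ and the $y_{j,j}$. Moreover $M$ is closed, i.e.\ $w^\perp\subseteq M$ for every $w\in M$: such a $w$ already has $(n-1)^2$ or $1+(n-2)^2$ neighbors inside $M$ (according as $w$ is short or long), which is the full size of $w^\perp$. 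Since $\Gamma$ is connected this gives $\Gamma=M\cong\mathbbm{W}(B_n)$ --- and in particular shows no $z\in N$ has $\theta(z)=\emptyset$ --- which is the assertion.

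The entire difficulty is concentrated in the Claim, and this is the step I expect to be the main obstacle. To prove it I would, given $z\in N$ with $\theta(z)\neq\emptyset$, pick $v\in\theta(z)$ and pass to $v^\perp$, a graph of known isomorphism type ($\mathbbm{W}(B_{n-1})$ if $v$ is short, $K_1^\ell+\mathbbm{W}(B_{n-2})$ if $v$ is long) that contains the non-adjacent pair $x_0,z$ together with a large portion of $\theta(z)$; the explicit structure of $\mathbbm{W}(B_{n-1})$ and $\mathbbm{W}(B_{n-2})$ then pins down the position of $z$ relative to $x_0$ inside $v^\perp$, and letting $v$ range over $\theta(z)$ determines $\theta(z)$ and the pairing globally. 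This analysis needs $n-2\geq 3$, i.e.\ $n\geq 5$; that threshold is also the source of the genuine exception at $n=4$, handled in Remark \ref{rk-locallylikewb4}. The remaining case $n=3$ --- where $\Delta_s(\mathbbm{W}(B_3))\cong 2K_2$, $\Delta_\ell(\mathbbm{W}(B_3))\cong K_2$ and $|\mathbbm{W}(B_3)|=9$ --- I would settle by a direct inspection of the few possibilities.
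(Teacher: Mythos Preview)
Your approach is sound and differs genuinely from the paper's argument. The paper works globally: it first shows the short vertices constitute a single $n$-clique, then (for $n\ge 6$) uses the connectedness of the long part $\mathbbm{W}(D_{n-1})$ of a short local graph to force a unique long component, counts the long vertices as $n(n-1)$ by inclusion--exclusion over the short clique, labels each long vertex $y_{i,j}$ by its pair of short non-neighbours, and finally reads off adjacency between any two long vertices inside a suitably chosen short local graph $x_m^\perp$. Because $\mathbbm{W}(D_4)\cong 3K_4$ is disconnected, the case $n=5$ requires a separate size estimate to rule out a second long component.

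Your route instead grows the graph outward from a single short vertex $x_0$: the Claim pins down the distance-$2$ vertices and their mutual relations, and the closure step replaces the global count. The Claim is indeed the real work, and your sketch is on the right track; two points worth making explicit when you write it up are (i) any $z\in N$ with $\theta(z)\neq\emptyset$ is necessarily long, since inside any $v^\perp$ two short vertices of $\mathbbm{W}(B_{n-1})$ or of the $\mathbbm{W}(B_{n-2})$ summand are always adjacent, and (ii) the non-adjacency of $z,z'$ with $s(z)\ne s(z')$ follows once one observes $z'^\perp=\{\text{partner of }z'\}\cup\theta(z')$ by cardinality. The coverage step---showing every long $y_{k,l}\in x_0^\perp$ with $j\notin\{k,l\}$ actually lies in $\theta(z)$---is exactly where $n\ge 5$ enters, via the existence of a short witness $y_{i,i}\in\theta(z)$ with $i\notin\{j,k,l\}$. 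A pleasant by-product of your method is that it treats $n=5$ and $n\ge 6$ uniformly; the paper's proof is shorter once the component structure is in hand but pays for that with the case split.
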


\begin{proof}
  It is straightforward to check the case $n = 3$.
  
  Next, let $n \geqslant 6$. Let $X$ be a short component of $\Gamma$ and $x
  \in X$ a short vertex. The short induced subgraph on $x^{\perp}$ is a clique
  on $n - 1$ elements which implies that $X$ is a clique on $n$ elements.
  By assumption, the long neighbors of $x$ induce a subgraph isomorphic to the
  long induced subgraph of $\mathbbm{W}(B_{n - 1})$. This subgraph is
  isomorphic to $\mathbbm{W}(D_{n - 1})$ and, in particular, is connected for
  $n \geqslant 6$. This implies that all
  long neighbors of $x$ are contained in a single long component $Y$ of
  $\Gamma$. Consider a short vertex $x_1 \in X$ adjacent to $x$. Again, all
  long neighbors of $x_1$ lie in one long component of $\Gamma$. But looking
  at $\{x, x_1 \}^{\perp} \subset x^{\perp}$ we see that $x$ and $x_1$ share
  long neighbors whence this component has to be $Y$ as well. Since $X$ is
  connected this shows that all long vertices adjacent to some vertex of $X$
  are contained in $Y$. Likewise, let $y \in Y$. The short induced subgraph of
  $y^{\perp}$ is a clique on $n$ vertices and thus in particular connected.
  Again, we see that for a long vertex $y_1$ adjacent to $y$ the common
  neighbors $\{y, y_1 \}^{\perp}$ contain a short vertex. Therefore the same
  argument as before shows that all short vertices adjacent to some vertex of
  $Y$ are contained in $X$. Since $\Gamma$ is connected this proves that $X$
  and $Y$ are the only short respectively long components of $\Gamma$.
  
  We count the number of long vertices by counting the long neighbors of the
  $n$ short vertices of $\Gamma$. By assumption, a short vertex has $(n -
  1) (n-2)$ long neighbors. Further, two short vertices have $(n-2) (n-3)$ long
  neighbors in common, three short vertices have $(n-3) (n-4)$ long
  neighbors in common, and so on. Thus there are
  \[ \binom{n}{1} (n-1)(n-2) - \binom{n}{2} (n-2)(n-3) + \ldots + (-
     1)^{n-1} \binom{n}{n-2} 2 = n (n-1) \]
  long vertices in $\Gamma$. Note that for the above equation we exploited
  that the alternating sum of the binomial coefficients equals zero, that is,
  $\sum_{k = 0}^n (- 1)^k  \binom{n}{k} = 0$.
  
  Let $x_1, x_2, \ldots, x_n$ be the short vertices of $\Gamma$.
  $\Gamma$ is locally $\mathbbm{W}(B_{n - 1})$ at short vertices which implies
  that for $1 \leqslant i \neq j \leqslant n$ the common neighborhood
  $\{x_r : r \nin \{i, j\}\}^{\perp}$ contains exactly two long vertices which
  we denote by $y_{i, j}$ and $y_{j, i}$. Since a long vertex is adjacent to
  exactly $n-2$ short vertices the $y_{i, j}$ thus defined are all distinct. By
  construction, $y_{i, j} \perp y_{j, i}$. Further, the $y_{i, j}$ exhaust $Y$
  because $\Gamma$ contains exactly $n (n-1)$ long vertices. Given two
  vertices $y_{i, j}$ and $y_{k, l}$, we find $m \in \{1, 2, \ldots, n\}
  \backslash\{i, j, k, l\}$ whence $y_{i, j}$ and $y_{k, l}$ are both
  contained in $x_m^{\perp} \cong \mathbbm{W}(B_{n - 1})$. $y_{i, j}$ is
  characterized in $x_m^{\perp}$ as one of the two long vertices contained in
  $\{x_r : r \nin \{i, j, m\}\}^{\perp}$. Likewise, $y_{k, l}$ is
  characterized in $x_m^{\perp}$ as one of the two long vertices contained in
  $\{x_r : r \nin \{k, l, m\}\}^{\perp}$. Consequently, for $\{i, j\} \neq
  \{k, l\}$, $y_{i, j} \perp y_{k, l}$ if and only if $\{i, j\} \cap \{k, l\}=
  \emptyset$. Hence, $\Gamma \cong \mathbbm{W}(B_n)$.
  
  Finally, consider the case $n = 5$. We still find that each short component
  is a clique on $5$ vertices. Let $X$ be one such short component. We count
  that there are $20$ long vertices neighbored to one of the vertices of $X$.
  On the other hand, we see again that each long component has short neighbors
  in only one short component. Accordingly, the $20$ long neighbors of $X$
  constitute a union of long components. However, a long component is locally
  $K_1  \sqcup  3 \cdot K_2$ and therefore has at least $12$ vertices. We
  conclude that there is only one long component $Y$ with vertices neighbored
  to $X$. Now, the remainder of the preceding argument applies and shows that
  $\Gamma \cong \mathbbm{W}(B_5)$ as claimed.
\end{proof}

The case $n = 4$ of Theorem \ref{thm-lr-recognizewbn} is discussed in Remark
\ref{rk-locallylikewb4} where it is shown that there are infinitely many
finite connected bichromatic graphs that are locally like $\mathbbm{W}(B_4)$.
The case of type $F_4$ is discussed in detail in the next section. Note that the Weyl graph $\mathbbm{W}(G_2)$ is isomorphic to three disjoint edges
of mixed type.

\section{\label{sct-wf4}Local recognition of $\mathbbm{W}(F_4)$}

\subsection{Graphs locally like $\mathbbm{W}(F_4)$}\label{4.1}

The Weyl graph $\mathbbm{W}(F_4)$ is a connected bichromatic locally homogeneous
graph on $24$ vertices with short local graph $\mathbbm{W}(B_3)$ and long
local graph $\mathbbm{W}(C_3)$. As we will see shortly, $\mathbbm{W}(F_4)$ is not locally recognizable.
Before we turn to investigating additional constraints under which we seek to
recognize $\mathbbm{W}(F_4)$ nonetheless, we study connected bichromatic
graphs $\Gamma$ which are locally like $\mathbbm{W}(F_4)$. The
results we obtain then guide our way in determining appropriate conditions
under which we are able to recognize $\mathbbm{W}(F_4)$ alongside its twisted
copy. An easy but crucial observation to start with is the following.

\begin{proposition}
  \label{thm-wf4-partitioninto4cliques}Let $\Gamma$ be locally like
  $\mathbbm{W}(F_4)$. The short (respectively long) induced subgraph of
  $\Gamma$ is isomorphic to a disjoint union of $4$-cliques.
\qed\end{proposition}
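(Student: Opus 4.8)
The plan is to analyze the short induced subgraph $\Gamma_s$ of $\Gamma$ (the argument for the long subgraph is symmetric, exchanging the roles of $B_3$ and $C_3$, since $\mathbb{W}(C_3)$ is obtained from $\mathbb{W}(B_3)$ by swapping short and long). First I would identify the relevant local structure: since $\Gamma$ is locally like $\mathbb{W}(F_4)$, the short local graph at a short vertex $x$ is $\mathbb{W}(B_3)$, and the short induced subgraph of $x^\perp$ is the short part of $\mathbb{W}(B_3)$, which by the explicit description of $\mathbb{W}(B_n)$ (the short vertices are the $y_{i,i}$ for $1\le i\le 3$, mutually non-adjacent) is an empty graph on $3$ vertices, i.e.\ $\overline{K_3}$. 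Wait --- that would make $\Gamma_s$ edgeless, which is wrong; rather, at a \emph{long} vertex $y$ the short local graph is the short part of $\mathbb{W}(C_3)$, and in $\mathbb{W}(C_n)$ the short vertices are the $y_{i,j}$ with $i\neq j$, inducing $\mathbb{W}(D_{n-1})$-type structure. So the correct move is: a short vertex $x$ of $\Gamma$ has short neighbors only through long vertices' local graphs. Concretely, I would instead argue directly that the short induced subgraph is locally (within itself) a disjoint union of cliques of a fixed size, and then invoke connectedness of components.

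The cleanest route: fix a short vertex $x$ and consider $x^\perp \cong \mathbb{W}(F_4)$, a graph on $24$ vertices. The short vertices of $\mathbb{W}(F_4)$ adjacent to $x$ are precisely the short vertices of the short local graph $\mathbb{W}(B_3)$ --- but we must be careful, as $\mathbb{W}(F_4)$ itself has both short and long vertices and $x$ being short means $x^\perp$ is the short local graph $\mathbb{W}(B_3)$. Within $\mathbb{W}(B_3)$ the three short vertices $y_{1,1},y_{2,2},y_{3,3}$ form a coclique, so $x$ has exactly three short neighbors and they are pairwise non-adjacent. Hence every short vertex of $\Gamma$ has exactly three short neighbors, and the short induced subgraph $\Gamma_s$ is $3$-regular; moreover any two short neighbors of $x$ are non-adjacent. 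Now I would use a triangle-or-independence argument on $\Gamma_s$: I need to show $\Gamma_s$ is a disjoint union of $4$-cliques, i.e.\ $K_4$'s --- but a $4$-clique is $3$-regular with every vertex adjacent to the other three, yet we just showed neighbors of a short vertex are \emph{non}-adjacent, a contradiction. So I have the roles reversed again.

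Resolving this: the assertion ``$4$-cliques'' forces me to locate where $4$-cliques of short vertices sit inside $\mathbb{W}(F_4)$. Looking at $x^\perp$ is the wrong local graph; the right statement is that \emph{at a long vertex} $y$, the long local graph is $\mathbb{W}(C_3)$ whose long vertices $y_{i,i}$ number three and are pairwise non-adjacent, while at a long vertex the \emph{short} neighbors induce $\mathbb{W}(D_2)\cong K_2[K_2]$-type pairs; and critically, inside $\mathbb{W}(F_4)$ the maximal short cliques have size $4$. The actual proof should proceed: count, using $|\mathbb{W}(F_4)|$-local data, that each short vertex lies in short cliques, show that a maximal short clique containing $x$ has size exactly $4$ by reading it off the short local graph $\Delta_s(\mathbb{W}(F_4))$, show two distinct maximal $4$-cliques meet in $\le 1$ vertex but then a shared vertex would force too-large a clique in the local graph, hence they are disjoint, and finally that every short vertex is in one, partitioning $\Gamma_s$. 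The main obstacle is correctly extracting the short-subgraph structure of $\mathbb{W}(F_4)$: one must verify from the $F_4$ root system (its $24$ roots, $12$ short and $12$ long, orthogonality pattern) that the short roots split into three mutually non-interfering $4$-cliques under commutation, and that this $4$-clique pattern is inherited locally --- I expect this bookkeeping about orthogonal sets of short $F_4$-roots, and propagating it via the local isomorphisms across a connected component, to be the delicate point, whereas the partition/connectedness wrap-up is routine.
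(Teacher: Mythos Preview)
Your proposal contains a concrete computational error that derails the whole argument. You claim that the three short vertices $y_{1,1},y_{2,2},y_{3,3}$ of $\mathbbm{W}(B_3)$ form a coclique. They do not: according to the adjacency rule for $\mathbbm{W}(B_n)$, we have $y_{i,i}\perp y_{k,k}$ if and only if $\{i\}\cap\{k\}=\emptyset$, i.e.\ $i\neq k$. Hence the short part of $\mathbbm{W}(B_3)$ is a $3$-clique $K_3$, not $\overline{K_3}$. This single misreading is what sends you in circles (and also note that $x^\perp$ is $\mathbbm{W}(B_3)$ on $9$ vertices, not $\mathbbm{W}(F_4)$ on $24$).

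Once this is corrected, the proof is immediate and is exactly why the paper marks it with a bare \qed: for a short vertex $x\in\Gamma$, the short vertices in $x^\perp\cong\mathbbm{W}(B_3)$ form a $K_3$, so the short induced subgraph $\Gamma_s$ is locally $K_3$. A connected graph that is locally $K_n$ is $K_{n+1}$: if $x$ has neighbourhood a clique $\{x_1,\dots,x_n\}$, then each $x_i$ already has the $n$ pairwise adjacent neighbours $\{x\}\cup\{x_j:j\neq i\}$, so has no others, and the component is $\{x,x_1,\dots,x_n\}\cong K_{n+1}$. Thus every component of $\Gamma_s$ is a $4$-clique. The long case is identical since the long part of $\mathbbm{W}(C_3)$ is likewise $K_3$. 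There is no need to invoke the $F_4$ root system or any delicate bookkeeping.
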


Let $\Gamma$ be a bichromatic graph that is locally like $\mathbbm{W}(F_4)$.
Observe that the graph obtained from $\Gamma$ by exchanging the roles of short
and long vertices is locally like $\mathbbm{W}(F_4)$ as well. Results that we
obtain for short vertices of graphs locally like $\mathbbm{W}(F_4)$ are
therefore also true for long vertices.

Paraphrasing Proposition \ref{thm-wf4-partitioninto4cliques}, the vertices of
$\Gamma$ come in $4$-cliques of the same type. In order to simplify things it
is natural to collapse these $4$-cliques into single vertices.

\begin{definition}
  Let $\Lambda$ be a graph and $\Pi$ a partition of its vertices. The
  {\em contraction} $\Lambda / \Pi$ is
  the graph on $\Pi$ such that two sets $A, B \in \Pi$ are adjacent whenever
  there is $a \in A$ and $b \in B$ which are adjacent in $\Lambda$. If
  $\Lambda$ is bichromatic then $\Pi$ is required to partition into sets of
  short and long vertices and $\Lambda / \Pi$ is a bichromatic graph in the
  natural way.
\end{definition}

In this language, we thus investigate the collapsed graph $\Gamma / \Pi$ where
$\Pi$ is the partition of $\Gamma$ into short and long $4$-cliques. To this
end, we analyze how these $4$-cliques relate to each other.

\begin{proposition}
  \label{thm-wf4-relationof4cliques}Let $\Gamma$ be locally like
  $\mathbbm{W}(F_4)$, and $x_1, x_2, x_3, x_4$ a short $4$-clique in $\Gamma$.
  Let $i \neq j$ and $k \neq l$.
  \begin{itemize}
    \item $\{x_i, x_j \}^{\perp}$ is locally $K_2^s  \sqcup  K_2^{\ell}$. In
    particular, for any pair $x_i, x_j$ there exist unique long vertices
    $y_{i, j}, y_{j, i}$ contained in $\{x_i, x_j \}^{\perp}$.
    
    \item $\{x_i, x_j, x_k \}^{\perp}$ contains no long vertex if $i, j, k$
    are distinct. In particular, the vertices $y_{i, j}$ are all distinct.
    
    \item There are exactly $12$ long vertices adjacent to at least one of the
    $x_i$, namely the above vertices $y_{i, j}$.
    
    \item $y_{i, j} \perp y_{k, l}$ implies that $\{k, l\}=\{i, j\}$ or $\{k,
    l\} \cap \{i, j\}= \emptyset$.
  \end{itemize}
\end{proposition}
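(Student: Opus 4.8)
The plan is to work entirely inside the short local graphs $x_i^{\perp}\cong\mathbbm{W}(B_3)$, since the structure of $\mathbbm{W}(B_3)$ is completely explicit and small. Recall $\mathbbm{W}(B_3)$ has $3$ short vertices (forming a short triangle, say $u_1,u_2,u_3$, the diagonal $y_{i,i}$'s) and $6$ long vertices $y_{i,j}$ ($i\neq j$) with $y_{i,j}\perp y_{k,l}$ iff $\{i,j\}\cap\{k,l\}=\emptyset$ or $(k,l)=(j,i)$; in particular the long subgraph is $\mathbbm{W}(D_3)\cong K(3,2)[K_2]\cong K_2\sqcup K_2\sqcup K_2$, three disjoint edges. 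From this one reads off directly: a short vertex in $\mathbbm{W}(B_3)$ has $4$ long neighbors inducing $K_2\sqcup K_2$, and an edge of short vertices (a short $K_2$) has exactly $2$ common long neighbors, which moreover form a $K_2$ and are the ``off-diagonal'' pair. These are the facts I want to transport.

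\medskip\noindent\textbf{Step 1 (first bullet).} Fix the short $4$-clique $x_1,x_2,x_3,x_4$ and a pair $x_i,x_j$. Inside $x_i^{\perp}\cong\mathbbm{W}(B_3)$, the vertex $x_j$ is a short vertex, so its long neighbors in $x_i^{\perp}$ induce $K_2^s\sqcup K_2^{\ell}$ — wait, more precisely the long neighbors of a short vertex in $\mathbbm{W}(B_3)$ induce two disjoint long edges together with the relevant short structure; the point is $\{x_i,x_j\}^{\perp}$, being the local graph at the short vertex $x_j$ inside $x_i^{\perp}\cong\mathbbm{W}(B_3)$, is isomorphic to $\Delta_s(\mathbbm{W}(B_3))=\mathbbm{W}(B_2)$, which is locally $K_2^s\sqcup K_2^{\ell}$ after discarding... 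I should just compute $\{x_i,x_j\}^{\perp}$ as the common neighborhood of two adjacent short vertices inside $x_i^{\perp}\cong\mathbbm{W}(B_3)$: it contains $x_k,x_l$ (the other two short clique members, forming a short $K_2$) and exactly the long vertices adjacent to both $x_i$ and $x_j$. In $\mathbbm{W}(B_3)$ with $x_i=u_i$, $x_j=u_j$, the long vertices adjacent to both $u_i$ and $u_j$ are exactly $y_{k,i}$... checking: $y_{a,b}\perp u_c=y_{c,c}$ iff $\{a,b\}\cap\{c\}=\emptyset$ or $(c,c)=(b,a)$, i.e. iff $c\notin\{a,b\}$. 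So $y_{a,b}\perp u_i$ and $y_{a,b}\perp u_j$ forces $\{a,b\}=\{k,l\}$ where $\{k,l\}=\{1,2,3\}\setminus\{i,j\}$... but in $\mathbbm{W}(B_3)$ there are only indices $1,2,3$. So the two common long neighbors are $y_{k,l}$ and $y_{l,k}$ where $\{k,l\}$ is the complementary pair, and these are adjacent (they form a long $K_2$). Transporting back: $\{x_i,x_j\}^{\perp}$ contains exactly two long vertices, call them $y_{i,j},y_{j,i}$, and $y_{i,j}\perp y_{j,i}$. Locally, $\{x_i,x_j\}^{\perp}\cong\mathbbm{W}(B_2)$ which is $K_2^s\sqcup K_2^{\ell}$ — verifying $\mathbbm{W}(B_2)$: short vertices $u_1,u_2$ with $u_1\perp u_2$? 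In $B_2$, $y_{1,1}\perp y_{2,2}$ iff $1\ne 2$ vacuously... $\{1\}\cap\{2\}=\emptyset$ yes so $u_1\perp u_2$; long $y_{1,2},y_{2,1}$ with $y_{1,2}\perp y_{2,1}$; $u_i\perp y_{a,b}$ iff $i\notin\{a,b\}$, impossible since $\{a,b\}=\{1,2\}$. So indeed $K_2^s\sqcup K_2^{\ell}$.

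\medskip\noindent\textbf{Step 2 (second bullet).} For distinct $i,j,k$, work inside $x_i^{\perp}\cong\mathbbm{W}(B_3)$: a long vertex in $\{x_i,x_j,x_k\}^{\perp}$ would be a long vertex of $\mathbbm{W}(B_3)$ adjacent to all three short vertices $u_i,u_j,u_k$, i.e. adjacent to the whole short triangle; but by the computation above a long vertex $y_{a,b}$ is adjacent to $u_c$ iff $c\notin\{a,b\}$, and $\{a,b\}$ has two elements among $\{1,2,3\}$, so it misses exactly two of the three — contradiction. Hence $\{x_i,x_j,x_k\}^{\perp}$ has no long vertex. For distinctness of the $y_{i,j}$: if $y_{i,j}=y_{k,l}$ as a single long vertex then it lies in $\{x_i,x_j\}^{\perp}\cap\{x_k,x_l\}^{\perp}=\{x_i,x_j,x_k,x_l\}^{\perp}$; unless $\{i,j\}=\{k,l\}$, the set $\{i,j\}\cup\{k,l\}$ has $\ge 3$ elements, so this common neighborhood contains $\{x_i,x_j,x_k\}^{\perp}$-type triples and hence no long vertex — contradiction. (The only subtlety is $\{i,j\}\cap\{k,l\}$ being a single element, giving exactly three indices; still covered.) So all $y_{i,j}$, $i\neq j$, are distinct, giving $4\cdot 3=12$ of them.

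\medskip\noindent\textbf{Step 3 (third bullet) and Step 4 (fourth bullet).} For the third bullet I must show every long vertex adjacent to some $x_i$ equals some $y_{a,b}$. If $y\perp x_i$, then inside $x_i^{\perp}\cong\mathbbm{W}(B_3)$ the long vertex $y$ is some long vertex $y_{a,b}$ of that copy, which is adjacent to $u_a,u_b$ — wait, $y_{a,b}$ is a \emph{long} vertex of $\mathbbm{W}(B_3)$ and short vertices are $u_1,u_2,u_3=x_j,x_k,x_l$ (the other clique members) — $y_{a,b}\perp u_c$ iff $c\notin\{a,b\}$, so $y_{a,b}$ is adjacent to exactly one of $x_j,x_k,x_l$, say $x_p$; then $y\in\{x_i,x_p\}^{\perp}$ and so $y\in\{y_{i,p},y_{p,i}\}$. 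Counting: $4$ short vertices each with $4$ long neighbors; each pair $\{x_i,x_p\}$ accounts for the $2$ shared vertices $y_{i,p},y_{p,i}$; inclusion–exclusion with the second bullet (no triple-common long vertex) gives exactly $4\cdot 4-\binom{4}{2}\cdot 2=16-12=12$, matching the $12$ distinct $y_{i,j}$. Finally the fourth bullet: suppose $y_{i,j}\perp y_{k,l}$ with $\{i,j\}\neq\{k,l\}$ and $\{i,j\}\cap\{k,l\}\neq\emptyset$. After relabeling the cases are (a) $\{i,j\}\cap\{k,l\}$ is a single common index, or (b) one of $y_{k,l},y_{l,k}$ shares both... (b) can't happen since pairs are equal then. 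In case (a), pick the index $m\in\{1,2,3,4\}$ not among $\{i,j,k,l\}$ (three distinct indices appear, so $m$ exists), and note $y_{i,j},y_{k,l}\in x_m^{\perp}\cong\mathbbm{W}(B_3)$: need to identify them there. $y_{i,j}$ is characterized in $x_m^{\perp}$ as one of the two long vertices in $\{x_r:r\notin\{i,j,m\}\}^{\perp}$, i.e. adjacent to the short $K_2$ on the remaining two clique members — in $\mathbbm{W}(B_3)$ these are off-diagonal long vertices of the form $y_{p,q},y_{q,p}$ for the complementary pair. Then the adjacency $y_{i,j}\perp y_{k,l}$ inside $\mathbbm{W}(B_3)$ must respect the rule ``long $\perp$ long iff disjoint index-pairs or transposed pair''; since $\{i,j\}\neq\{k,l\}$ we'd need disjoint pairs inside the $3$-index world of $x_m^{\perp}$, but $\{i,j\}$ and $\{k,l\}$ share an index — contradiction. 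Hence $\{i,j\}\cap\{k,l\}=\emptyset$.

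\medskip The main obstacle is Step 3's fourth-bullet argument: one must pin down precisely which long vertices of the abstract copy $x_m^{\perp}\cong\mathbbm{W}(B_3)$ correspond to $y_{i,j}$ and $y_{k,l}$ — i.e. verify that the ``characterization as the common long neighbors of a short $K_2$'' is unambiguous in $\mathbbm{W}(B_3)$ and compatible with the abstract isomorphism — and then check all the small index cases. Everything else is a direct translation of explicit adjacencies in $\mathbbm{W}(B_3)$ plus one inclusion–exclusion count; I expect no genuine difficulty there, just careful bookkeeping with the index pairs.
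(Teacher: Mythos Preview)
Your Steps 1--3 are essentially the paper's argument, with one arithmetic slip: in $\mathbbm{W}(B_3)$ a short vertex has exactly \emph{two} long neighbours (the pair $y_{a,b},y_{b,a}$ with $\{a,b\}$ the complementary $2$-set), not four; correspondingly each $x_i$ has six long neighbours in $\Gamma$ (the full long part of $\mathbbm{W}(B_3)$), and the inclusion--exclusion should read $4\cdot 6-\binom{4}{2}\cdot 2=24-12=12$. This is harmless, since your direct argument (``every long neighbour of $x_i$ already lies in some $\{x_i,x_p\}^{\perp}$, hence equals some $y_{i,p}$ or $y_{p,i}$'') establishes the third bullet without any counting.

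Step 4, however, does not work as written. You choose $m\in\{1,2,3,4\}$ \emph{outside} $\{i,j,k,l\}$ and then assert $y_{i,j},y_{k,l}\in x_m^{\perp}$. But by your own second bullet, $y_{i,j}$ is adjacent to $x_i$ and $x_j$ and to \emph{no other} member of the $4$-clique; since $m\notin\{i,j\}$, we have $y_{i,j}\notin x_m^{\perp}$. (You appear to be importing the convention from the $B_n$ proof, where $y_{i,j}$ denotes a long vertex in $\{x_r:r\notin\{i,j\}\}^{\perp}$; here the convention is the opposite one.) The fix is exactly what the paper does: work at the \emph{common} index. If $\{i,j\}\cap\{k,l\}=\{i_0\}$, then both $y_{i,j}$ and $y_{k,l}$ lie in $x_{i_0}^{\perp}\cong\mathbbm{W}(B_3)$, and there each is the unique long edge attached to a \emph{different} short vertex of the triangle (namely the other index of its pair); in $\mathbbm{W}(B_3)$ the three long edges are pairwise non-adjacent, giving the contradiction. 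Your closing paragraph correctly identifies this step as the delicate one, but the vertex you chose to localise at is the one place where neither $y_{i,j}$ nor $y_{k,l}$ is visible.
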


\begin{proof}
  Exploiting the local structure at $x_i$ we see that every short adjacent
  pair $x_i, x_j$ has exactly two long neighbors in common which we will
  (arbitrarily) denote by $y_{i, j}$ and $y_{j, i}$. Accordingly, $y_{i, j}
  \perp y_{j, i}$. Looking at the neighbors of a vertex $y_{i, j}$ reveals
  that $x_i$ and $x_j$ are the only short vertices among $x_1, x_2, x_3, x_4$
  which are adjacent to $y_{i, j}$. Consequently, the $y_{i, j}$ are $12$
  distinct vertices. Since three adjacent short vertices share no long
  neighbors we count that exactly
  \[ \binom{4}{1} 6 - \binom{4}{2} 2 = 12 \]
  long vertices are neighbored to at least one of the vertices $x_1, x_2, x_3,
  x_4$. Consequently, the long neighbors of the $x_i$ are precisely the
  vertices $y_{i, j}$. For the last claim, assume that $y_{i, j} \perp y_{k,
  l}$ and $\{k, l\} \cap \{i, j\}=\{i_0 \}$. A look at the neighbors of
  $x_{i_0}$ shows that this is a contradiction.
\end{proof}

If $\Gamma$ is locally like $\mathbbm{W}(F_4)$ and $\Pi$ is the partition of
$\Gamma$ into short and long $4$-cliques, then we add the following extra
structure to the collapsed graph $\Gamma / \Pi$. Two vertices $X, Y \in \Gamma
/ \Pi$ are said to be {\em strongly connected} if every $x \in X$ is at
distance $1$ from $Y$ in $\Gamma$ and vice versa. In this case, we think of
$X$ and $Y$ as being connected by two edges, the reason of which will be clear
from the next proposition. The number of neighbors of $X$ where we count those
neighbors twice that are strongly connected to $X$ is said to be the
{\em bivalency} of $X$.

\begin{proposition}
  \label{thm-wf4-contractionlocally}Let $\Gamma$ be locally like
  $\mathbbm{W}(F_4)$, and let $\Pi$ be the partition of $\Gamma$ into short
  and long $4$-cliques. The contraction $\Gamma / \Pi$ is
  bipartite of bivalency $6$.
\end{proposition}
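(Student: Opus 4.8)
The plan is to verify the two assertions—bipartiteness and bivalency $6$—separately, in each case reducing a statement about $\Gamma/\Pi$ to the structure of the $4$-cliques established in Proposition~\ref{thm-wf4-relationof4cliques}. For bipartiteness, note that by definition the parts of $\Pi$ are entirely short or entirely long, so $\Gamma/\Pi$ inherits a natural $2$-coloring of its vertex set into short and long classes. It therefore suffices to show that no two short $4$-cliques are adjacent in $\Gamma/\Pi$, and dually (using the short/long symmetry remarked after Proposition~\ref{thm-wf4-partitioninto4cliques}) that no two long $4$-cliques are adjacent. Suppose $X$ and $X'$ are distinct short $4$-cliques with $x \in X$, $x' \in X'$ and $x \perp x'$. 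Looking at the local graph at $x$: its short induced subgraph is $\mathbbm{W}(B_3)$ restricted to short vertices, which by Proposition~\ref{thm-wf4-partitioninto4cliques} (or direct inspection of $\mathbbm{W}(B_3)$) is a disjoint union of cliques, in fact a single $3$-clique since $x$ has exactly three short neighbors, all in $X$. Hence $x'$, being a short neighbor of $x$, lies in $X$, contradicting $X \neq X'$. The same argument with roles of short and long exchanged finishes bipartiteness.

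For the bivalency, fix a short $4$-clique $X = \{x_1,x_2,x_3,x_4\}$; by symmetry it is enough to count (with the doubling convention) the long $4$-cliques adjacent to $X$. By the third bullet of Proposition~\ref{thm-wf4-relationof4cliques} there are exactly $12$ long vertices adjacent to at least one $x_i$, namely the $y_{i,j}$ with $i \neq j$, and these are grouped into long $4$-cliques by $\Pi$; since a long $4$-clique has $4$ vertices, the $12$ vertices $y_{i,j}$ distribute into exactly $3$ long $4$-cliques, \emph{provided} each of these $4$-cliques is entirely contained among the $y_{i,j}$—which follows because if some $4$-clique contained one $y_{i,j}$ together with a long vertex $z$ not adjacent to any $x_k$, then $z \perp y_{i,j}$, and looking at the local graph at $x_i$ (which contains $y_{i,j}$) one checks $z$ would have to be adjacent to $x_i$ after all. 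So $X$ has exactly $3$ neighbors in $\Gamma/\Pi$, and the bivalency of $X$ is $6$ once we show each of these three neighbors is \emph{strongly} connected to $X$, i.e.\ that every $x_i$ is adjacent to every one of the three long cliques. This is the crux: we must argue that the three cliques are, up to relabeling, $\{y_{1,2},y_{2,1},y_{3,4},y_{4,3}\}$, $\{y_{1,3},y_{3,1},y_{2,4},y_{4,2}\}$, $\{y_{1,4},y_{4,1},y_{2,3},y_{3,2}\}$—i.e.\ the pairing of the $y_{i,j}$'s into $4$-cliques matches the pairing of $\{i,j\}$'s into complementary pairs in $\{1,2,3,4\}$.

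To pin this down I would use the last bullet of Proposition~\ref{thm-wf4-relationof4cliques}: $y_{i,j}\perp y_{k,l}$ forces $\{k,l\}=\{i,j\}$ or $\{k,l\}\cap\{i,j\}=\emptyset$. Since $y_{i,j}\perp y_{j,i}$ always holds and each long $4$-clique is a $4$-clique (all six pairs adjacent), the clique containing $y_{i,j}$ and $y_{j,i}$ must have its remaining two vertices of the form $y_{k,l},y_{l,k}$ with $\{k,l\}\cap\{i,j\}=\emptyset$, hence $\{k,l\}$ the complement of $\{i,j\}$ in $\{1,2,3,4\}$. (One must also rule out that $y_{i,j}$ and $y_{j,i}$ lie in different long cliques: a long $4$-clique containing $y_{i,j}$ has three other vertices, each adjacent to $y_{i,j}$, hence each either equal to $y_{j,i}$ or of the form $y_{k,l}$ with $\{k,l\}$ the complement pair; since there are only two such complementary labels $y_{k,l},y_{l,k}$, the fourth vertex of the clique must be $y_{j,i}$.) This exhibits the three long cliques explicitly, and each clique, e.g.\ $\{y_{1,2},y_{2,1},y_{3,4},y_{4,3}\}$, meets every $x_i$—since $x_1\perp y_{1,2}$, $x_2\perp y_{2,1}$, $x_3\perp y_{3,4}$, $x_4\perp y_{4,3}$—and conversely every $x_i$ is adjacent to some vertex of each clique (as each clique uses all four labels). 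Thus every neighbor of $X$ in $\Gamma/\Pi$ is strongly connected to it, so the bivalency is $2\cdot 3 = 6$. I expect the main obstacle to be precisely this last point: organizing the adjacency constraints among the twelve $y_{i,j}$ so as to conclude the $4$-clique partition is the complementary-pair partition, and then checking "strongly connected" rather than merely "connected"—the rest is bookkeeping with the already-proven local facts.
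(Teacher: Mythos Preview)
Your bipartiteness argument is fine, but the bivalency argument contains a genuine error. You assert that the long $4$-clique containing some $y_{i,j}$ must lie entirely among the twelve vertices $y_{k,l}$, arguing that if it contained a long vertex $z$ adjacent to no $x_k$, then from $z\perp y_{i,j}$ and the local graph at $x_i$ one would force $z\perp x_i$. This inference is invalid: in $x_i^{\perp}\cong\mathbbm{W}(B_3)$ a long vertex has exactly \emph{one} long neighbor, whereas globally $y_{i,j}$ has three long neighbors (the rest of its $4$-clique). Hence two of those clique-mates necessarily lie outside $x_i^{\perp}$, and nothing prevents them from being adjacent to no $x_k$ at all. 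In fact the construction of Lemma~\ref{thm-wf4-fromlocN6} applied to a graph that is locally $\overline{K_6}$ produces examples in which $X$ has six distinct long neighboring cliques, none of them strongly connected to $X$; your conclusion that $X$ always has exactly three neighbors, each strongly connected, is therefore false.

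The paper's argument avoids this by not trying to pin down the clique partition. It only observes that each pair $y_{i,j},y_{j,i}$ lies in a single long neighbor $Y_{\{i,j\}}$ of $X$, and that by the last bullet of Proposition~\ref{thm-wf4-relationof4cliques} the cliques $Y_{\{i,j\}}$ and $Y_{\{k,l\}}$ (with $\{k,l\}$ the complement of $\{i,j\}$) are either equal---in which case that single clique is strongly connected to $X$ and contributes $2$ to the bivalency---or distinct---in which case neither contains any further $y_{a,b}$, neither is strongly connected, and each contributes $1$. Either way each of the three complementary pairs $\{\{i,j\},\{k,l\}\}$ contributes $2$, giving bivalency $6$. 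Your argument can be repaired along exactly these lines: drop the claim that the twelve $y_{i,j}$ form three cliques, and instead do the case split on whether $Y_{\{i,j\}}=Y_{\{k,l\}}$.
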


\begin{proof}
  Let $X \in \Gamma / \Pi$ be a short vertex. By Proposition
  \ref{thm-wf4-partitioninto4cliques}, $X$ has only long neighbors. $X =\{x_1,
  x_2, x_3, x_4 \}$ is a $4$-clique of $\Gamma$ and according to Proposition
  \ref{thm-wf4-relationof4cliques} there are $12$ long vertices $y_{i, j}$ at
  distance $1$ from $X$ in $\Gamma$. Each pair of vertices $y_{i, j}$, $y_{j,
  i}$ is contained in exactly one long neighbor $Y_{\{i, j\}}$ of $X$. Let $k,
  l$ be the indices such that $\{i, j, k, l\}=\{1, 2, 3, 4\}$. Then, by
  Proposition \ref{thm-wf4-relationof4cliques}, either $Y_{\{k, l\}} \neq
  Y_{\{i, j\}}$, in which case both long vertices are connected to $X$ by just
  one edge, or $Y_{\{k, l\}} = Y_{\{i, j\}}$, in which case both long vertices
  are connected to $X$ by two edges. In any case, we count that the bivalency
  of $X$ is $6$.
\end{proof}

We now do the reverse and prove that every bipartite graph of bivalency $6$ is
the contraction of some graph which is locally like $\mathbbm{W}(F_4)$. Note,
however, that non-isomorphic graphs locally like $\mathbbm{W}(F_4)$ can have
isomorphic contractions.

\begin{lemma}
  \label{thm-wf4-fromlocN6}For every connected bipartite graph $\Lambda$ of
  bivalency $6$ there is a connected bichromatic graph $\Gamma$ that is
  locally like $\mathbbm{W}(F_4)$ such that $\Gamma / \Pi = \Lambda$ where
  $\Pi$ is the partition of $\Gamma$ into short and long
  $4$-cliques.
\end{lemma}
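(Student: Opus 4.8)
The plan is to construct $\Gamma$ explicitly from $\Lambda$ by "blowing up" each vertex of $\Lambda$ into a $4$-clique and each edge of $\Lambda$ into a suitable matching between the corresponding cliques, in such a way that the resulting local structure at every vertex is isomorphic to $\mathbbm{W}(F_4)$. First I would fix the colours: the short vertices of $\Lambda$ become short $4$-cliques and likewise for long vertices, consistent with Proposition \ref{thm-wf4-partitioninto4cliques}. The crucial modelling step is to identify the combinatorial data that Proposition \ref{thm-wf4-contractionlocally} already extracts from a graph locally like $\mathbbm{W}(F_4)$: at each vertex $X$ of bivalency $6$, the $6$ (counted with multiplicity) neighbours come in three pairs $\{Y_{\{i,j\}},Y_{\{k,l\}}\}$ indexed by the three partitions of $\{1,2,3,4\}$ into two pairs, where a strongly connected neighbour uses both slots of a pair. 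I would therefore record, for each vertex $X$ of $\Lambda$, a bijection between its $6$ bivalency-slots and the $6$ ordered-into-three-pairs incidences $\{i,j\}\subset\{1,2,3,4\}$; equivalently a "spread structure" pairing up the six edges at $X$ into three matched pairs, with strongly connected edges counting as a matched pair by themselves.

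The main construction then goes as follows. Take vertex set $\bigsqcup_{X\in\Lambda}\{x_1^X,x_2^X,x_3^X,x_4^X\}$ with each quadruple a clique of the appropriate colour. For each edge $XY$ of $\Lambda$, the chosen pair-structures at $X$ and at $Y$ assign to this edge a two-element subset $\{i,j\}\subset\{1,2,3,4\}$ on the $X$-side and $\{i',j'\}$ on the $Y$-side; I join $x_i^X,x_j^X$ to $y_{i'}^Y,y_{j'}^Y$ (and to no other vertices of $Y$), as dictated by the $y_{i,j}\perp y_{k,l}$ adjacency pattern of Proposition \ref{thm-wf4-relationof4cliques}: namely $x_i^X\perp y_{i'}^Y,y_{j'}^Y$ and $x_j^X\perp y_{i'}^Y,y_{j'}^Y$. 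When $XY$ is a strongly connected (double) edge the two slots it occupies at $X$ force all four of $x_1^X,\dots,x_4^X$ to be adjacent to the clique $Y$, and symmetrically, which is exactly the pattern by which a single long $4$-clique $Y_{\{i,j\}}=Y_{\{k,l\}}$ meets $X$ in $\mathbbm{W}(F_4)$. One must check the pair-structures can be chosen coherently on a connected $\Lambda$, but since the choice at each vertex is independent this is immediate; orientation/twisting issues of the kind producing the "twisted copy" are precisely the freedom in these choices, and any choice works for the present existence statement.

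The heart of the proof is then the verification that $\Gamma$ so constructed is locally like $\mathbbm{W}(F_4)$, i.e.\ that $x^\perp$ for a short vertex $x=x_1^X$ induces $\mathbbm{W}(B_3)$ and a long vertex induces $\mathbbm{W}(C_3)$. Inside the clique $X$, $x$ has the three neighbours $x_2^X,x_3^X,x_4^X$; the short part of $x^\perp$ is thus a triangle, matching the short part $K_3$ of $\mathbbm{W}(B_3)$. The long neighbours of $x$ are the vertices contributed by those edges at $X$ whose assigned subset $\{i,j\}$ contains the index $1$; there are exactly three such edges (the three incidences of the index $1$ among the three matched pairs), each contributing two long vertices, giving $6$ long neighbours, and their mutual adjacencies and adjacencies to $x_2^X,x_3^X,x_4^X$ are read off from the construction and checked against $\mathbbm{W}(B_3)$ directly. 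This reduces to a finite check on the $24$-vertex graph $\mathbbm{W}(F_4)$, using the description of $\mathbbm{W}(B_n)$ and $\mathbbm{W}(C_n)$ given in Section \ref{sct-recognizeweylgraphs}. The main obstacle I anticipate is bookkeeping: making the edge-blow-up rule precise enough that one can confirm \emph{no spurious adjacencies} arise — in particular that two vertices $x_i^X$, $z_k^Z$ lying in cliques $X\neq Z$ that are non-adjacent in $\Lambda$ are genuinely non-adjacent, and that the local graph contains no extra long–long edges beyond those prescribed. Once the adjacency rule is pinned down, connectivity of $\Gamma$ follows from connectivity of $\Lambda$ together with connectivity of each local graph $\mathbbm{W}(F_4)$, and the identity $\Gamma/\Pi=\Lambda$ holds by construction since $\Pi$ is exactly the partition into the chosen $4$-cliques.
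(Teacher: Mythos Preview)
Your proposal is correct and is essentially the paper's own construction: assign to each edge-end at a vertex $x\in\Lambda$ a $2$-subset $a(x,y)\subset\{1,2,3,4\}$ (injectively, reserving the complement of $a(x,y)$ whenever $y$ is strongly connected to $x$), blow up each vertex into a $4$-clique of the same colour, and declare $x_i\perp y_j$ precisely when $(i,j)\in a(x,y)\times a(y,x)$ (or, for a strongly connected pair, when $(i,j)\in\overline{a(x,y)}\times\overline{a(y,x)}$). Your ``matched pair'' spread structure imposes a little more organisation than the paper actually needs, but it is harmless, and the paper likewise leaves the verification that $\Gamma$ is locally like $\mathbbm{W}(F_4)$ as a straightforward check.
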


\begin{proof}
  Let $\Lambda$ be a bipartite graph of bivalency $6$. Exploiting that
  $\Lambda$ is $2$-colorable, we may identify $\Lambda$ with a bichromatic
  graph such that no two vertices of the same type are adjacent. For any
  vertex $x$ of $\Lambda$ choose an injection
  \[ x^{\perp} \rightarrow \binom{4}{2}, \hspace{2em} y \mapsto a (x, y) \]
  from its neighbors to the six $2$-subsets of $\{1, 2, 3, 4\}$ such that for
  strongly connected vertices $x, y$ the complement of $a (x, y)$ is not
  attained. This is always possible since $\Lambda$ has bivalency $6$. To
  every directed edge $(x, y)$ we thus assigned the $2$-subset $a (x, y)$ of
  $\{1, 2, 3, 4\}$. Construct the bichromatic graph $\Gamma$ from $\Lambda$ as
  follows. For every vertex $x \in \Lambda$ add a $4$-clique $x_1, x_2, x_3,
  x_4$ of the same type as $x$ to $\Gamma$. Then, for $x, y \in \Lambda$ let
  $x_i$ and $y_j$ be adjacent in $\Gamma$ if and only if $x$ and $y$ are
  adjacent in $\Lambda$ and the following holds: either $(i, j) \in a (x, y)
  \times a (y, x)$, or $x$ and $y$ are strongly connected and $(i, j) \in
  \overline{a (x, y)} \times \overline{a (y, x)}$. By construction,
  contracting the $4$-cliques of $\Gamma$ produces $\Lambda$. It is
  straightforward to check that $\Gamma$ is locally like $\mathbbm{W}(F_4)$.
\end{proof}

\begin{corollary}
  \label{thm-infinitelymanygraphslocallylikewf4}There exist infinitely many
  non-isomorphic finite connected bichromatic graphs that are locally like
  $\mathbbm{W}(F_4)$.
\end{corollary}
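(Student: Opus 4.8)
The plan is to combine Lemma~\ref{thm-wf4-fromlocN6} with the existence of infinitely many non-isomorphic finite connected bipartite graphs of bivalency~$6$. Indeed, by Lemma~\ref{thm-wf4-fromlocN6} every such bipartite graph $\Lambda$ is the contraction $\Gamma/\Pi$ of some connected bichromatic graph $\Gamma$ that is locally like $\mathbbm{W}(F_4)$, where $\Pi$ is the partition of $\Gamma$ into short and long $4$-cliques. Since contracting the $4$-cliques of $\Gamma$ recovers $\Lambda$, two graphs $\Gamma$, $\Gamma'$ locally like $\mathbbm{W}(F_4)$ with non-isomorphic contractions $\Gamma/\Pi \not\cong \Gamma'/\Pi'$ must themselves be non-isomorphic (an isomorphism of bichromatic graphs maps $4$-cliques of one type to $4$-cliques of the same type, hence descends to an isomorphism of contractions). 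So it suffices to exhibit infinitely many pairwise non-isomorphic finite connected bipartite graphs of bivalency~$6$.

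Next I would produce such a family explicitly. The simplest choice is to take, for each $n \geqslant 2$, the bipartite graph $\Lambda_n$ that is $6$-regular (in the ordinary sense, so with no strongly connected pairs, giving bivalency $6$) and connected on $2n$ vertices for $n$ large enough; for instance one can take a connected bipartite $6$-regular graph on $n+n$ vertices whenever $n \geqslant 6$, e.g.\ the circulant-type graph on $\mathbb{Z}_n \sqcup \mathbb{Z}_n$ with $i$ in the first part adjacent to $i, i+1, i+2, i+3, i+4, i+5$ in the second part. These are connected, bipartite, and $6$-valent, hence have bivalency $6$, and since $|\Lambda_n| = 2n$ they are pairwise non-isomorphic. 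Applying Lemma~\ref{thm-wf4-fromlocN6} to each $\Lambda_n$ yields a connected bichromatic graph $\Gamma_n$ locally like $\mathbbm{W}(F_4)$ with $\Gamma_n/\Pi_n \cong \Lambda_n$; since the $\Lambda_n$ are pairwise non-isomorphic, so are the $\Gamma_n$, and each $\Gamma_n$ is finite (it has $8n$ vertices). This gives the desired infinite family.

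I do not expect any real obstacle here: the only thing to be slightly careful about is the remark preceding Lemma~\ref{thm-wf4-fromlocN6} that non-isomorphic graphs locally like $\mathbbm{W}(F_4)$ \emph{can} share the same contraction --- this means the correspondence $\Gamma \mapsto \Gamma/\Pi$ is not injective, but that is harmless for our purposes, since we only need that \emph{distinct} contractions force distinct graphs, i.e.\ that the map is well-defined on isomorphism classes and separates the $\Lambda_n$. The one genuine point requiring a line of justification is the claim that a bichromatic isomorphism $\Gamma_m \to \Gamma_n$ induces an isomorphism of contractions: this follows from Proposition~\ref{thm-wf4-partitioninto4cliques}, which characterizes $\Pi$ intrinsically as the partition into connected components of the short and long induced subgraphs, so any isomorphism automatically respects it. With that observation in place the corollary is immediate.
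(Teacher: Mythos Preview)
Your proof is correct and follows essentially the same strategy as the paper: exhibit infinitely many finite connected bipartite graphs of bivalency~$6$ and invoke Lemma~\ref{thm-wf4-fromlocN6}. The paper uses the family $C_k \times C_m \times C_n$ with $k,m,n \geqslant 4$ even in place of your circulant-type graphs, and leaves the passage from non-isomorphic $\Lambda$'s to non-isomorphic $\Gamma$'s implicit (it is immediate from the vertex count $|\Gamma|=4|\Lambda|$), whereas you spell it out via Proposition~\ref{thm-wf4-partitioninto4cliques}.
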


\begin{proof}
  We claim that there are infinitely many finite connected bipartite graphs
  $\Lambda$ that are locally $\overline{K_6}$ and hence of bivalency $6$ (if
  no vertices are assumed to be strongly connected). To this end, note that
  the graphs $C_k \times C_m \times C_n$ are connected and locally
  $\overline{K_6}$ for $k, m, n \geqslant 4$. Since cycles $C_n$ are $2$-colorable
  whenever $n$ is even, the graphs $C_k \times C_m \times C_n$ are
  $2$-colorable and hence bipartite whenever $k$, $m$, $n$ are all even. The
  claim follows from Lemma \ref{thm-wf4-fromlocN6}.
\end{proof}

\begin{remark}
  \label{rk-locallylikewb4}Analogous to Lemma \ref{thm-wf4-fromlocN6} one
  shows that for every connected bipartite graph $\Lambda$ of bivalency $(2,
  6)$ (meaning that vertices of one type have valency $2$ and vertices of the
  other type have valency $6$) there is a connected bichromatic graph $\Gamma$
  that is locally like $\mathbbm{W}(B_4)$ such that $\Gamma / \Pi = \Lambda$
  where $\Pi$ is again the partition of $\Gamma$ into short and long
  $4$-cliques. This easily implies that there are infinitely many finite
  connected bichromatic graphs that are locally like $\mathbbm{W}(B_4)$.
\end{remark}

Let $\Gamma$ be locally like $\mathbbm{W}(F_4)$ and assume that the collapsed
graph $\Gamma / \Pi$ contains strongly connected vertices $X$ and $Y$. This means
that, say, $X = {x_1,x_2,x_3,x_4}$ are short vertices, $Y = {y_1,y_2,y_3,y_4}$
are long vertices, and we have the adjacencies
\[ x_1,x_2 \perp y_1,y_2, \hspace{4em} x_3,x_4 \perp y_3,y_4. \]
It is straightforward to check that replacing these by
\[ x_1,x_2 \perp y_3,y_4, \hspace{4em} x_3,x_4 \perp y_1,y_2 \]
produces a graph $\Gamma^\prime$ which is also locally like $\mathbbm{W}(F_4)$.
We say that $\Gamma^\prime$ is a {\em twisted copy} of $\Gamma$. In particular, for
$\Gamma = \mathbbm{W}(F_4)$ these twisted copies are all isomorphic and we denote
the resulting graph by $\mathbbm{W}(F_4)^\prime$.

\subsection{Recognition results}

We now discuss further properties of the Weyl graph $\mathbbm{W}(F_4)$ in order
to characterize $\mathbbm{W}(F_4)$ among the connected bichromatic graphs that
are locally like $\mathbbm{W}(F_4)$. For more details we refer to the thesis
{\cite{straub-diploma}} of the third author. We start with some easy
observations.

\begin{proposition}
  \label{thm-wf4-nrofvertices}Let $\Gamma$ be a finite bichromatic graph
  that is locally like $\mathbbm{W}(F_4)$. Then the numbers of short and long
  vertices in $\Gamma$ are the same, $| \Gamma |$ is divisible by $8$ and
  $| \Gamma | \geqslant 24$.
\qed\end{proposition}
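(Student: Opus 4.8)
The plan is to establish the three claims in the order stated, each following from the combinatorial structure already extracted in Propositions \ref{thm-wf4-partitioninto4cliques} and \ref{thm-wf4-relationof4cliques}. First, to see that the numbers of short and long vertices agree, I would count incidences between short $4$-cliques and long $4$-cliques using the contraction $\Gamma / \Pi$, which by Proposition \ref{thm-wf4-contractionlocally} is bipartite of bivalency $6$: every short vertex of $\Gamma/\Pi$ has bivalency $6$ and every long vertex has bivalency $6$, so counting edges (with multiplicity $2$ for strongly connected pairs) from each side gives $6 s' = 6 \ell'$, where $s'$ and $\ell'$ are the numbers of short and long $4$-cliques respectively. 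Hence $s' = \ell'$, and since each $4$-clique has exactly $4$ vertices, the number of short vertices $4 s'$ equals the number of long vertices $4 \ell'$.

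Next, divisibility by $8$ is immediate from the same observation: $|\Gamma| = 4 s' + 4 \ell' = 8 s'$ by Proposition \ref{thm-wf4-partitioninto4cliques} together with $s' = \ell'$, so $|\Gamma|$ is a multiple of $8$. Finally, for the lower bound $|\Gamma| \geqslant 24$, I would argue that there is at least one short $4$-clique $X = \{x_1, x_2, x_3, x_4\}$ (the graph is nonempty and, being locally like $\mathbbm{W}(F_4)$, contains both short and long vertices), and Proposition \ref{thm-wf4-relationof4cliques} supplies $12$ distinct long vertices $y_{i,j}$ adjacent to at least one $x_i$. These $12$ long vertices together with the $4$ vertices of $X$ already give $16$ vertices; but each of the $y_{i,j}$ lies in a long $4$-clique, and since by Proposition \ref{thm-wf4-relationof4cliques} no long vertex is adjacent to three of the $x_i$, the long $4$-clique containing $y_{i,j}$ is adjacent to at most two vertices of $X$, so by bivalency $6$ one sees that the $12$ vertices $y_{i,j}$ are distributed among exactly $s'$ long $4$-cliques with $s' \geqslant 6$ (each long $4$-clique picking up at most $2$ of the $y_{i,j}$). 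This forces $\ell' = s' \geqslant 6$, hence $|\Gamma| = 8 s' \geqslant 48$ — which is stronger than needed, so in fact a more careful count shows $s' = \ell' = 6$ is achieved precisely by $\mathbbm{W}(F_4)$ itself and the bound $|\Gamma| \geqslant 24$ is really an equality $24$ only in the degenerate reading where short and long $4$-cliques can coincide in count $3$; the safe route is simply: $s' \geqslant 3$ because the $12$ long vertices $y_{i,j}$ need at least $\lceil 12/4 \rceil = 3$ long $4$-cliques to contain them, giving $\ell' \geqslant 3$, hence $s' = \ell' \geqslant 3$ and $|\Gamma| = 8 s' \geqslant 24$.

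The main obstacle I anticipate is not any single deep step but rather being careful with the bookkeeping of strongly connected pairs when counting edges in $\Gamma/\Pi$: one must consistently decide whether "bivalency $6$" counts a strongly connected neighbor once or twice, and make the edge-count $6 s' = 6\ell'$ come out right under that convention. Once the convention from the paragraph preceding Proposition \ref{thm-wf4-contractionlocally} is fixed, all three assertions drop out of a single incidence count, so the proposition is genuinely "easy" as the surrounding text advertises; the only real content is organizing the double count and invoking the distinctness of the $y_{i,j}$ from Proposition \ref{thm-wf4-relationof4cliques} to get the $|\Gamma| \geqslant 24$ floor.
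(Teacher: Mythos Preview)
The paper gives no proof of this proposition (it is marked with a bare \qed\ as an ``easy observation''), so there is nothing to compare against beyond the natural argument. Your treatment of the first two claims is fine: the bivalency-$6$ double count on $\Gamma/\Pi$ correctly gives $s'=\ell'$, and $|\Gamma|=8s'$ follows. (An even more direct route, avoiding the contraction entirely, is to count short--long edges in $\Gamma$ itself: the short local graph $\mathbbm{W}(B_3)$ has $6$ long vertices and the long local graph $\mathbbm{W}(C_3)$ has $6$ short vertices, so $6\cdot|\text{short}|=6\cdot|\text{long}|$.)

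Your argument for $|\Gamma|\geqslant 24$, however, contains a genuine error before you reach the ``safe route''. The claim that each long $4$-clique picks up at most two of the $y_{i,j}$ is false: Proposition~\ref{thm-wf4-relationof4cliques} allows $y_{i,j}\perp y_{k,l}$ whenever $\{i,j\}\cap\{k,l\}=\emptyset$, so $\{y_{i,j},y_{j,i},y_{k,l},y_{l,k}\}$ can form a full long $4$-clique --- and in $\mathbbm{W}(F_4)$ itself this is exactly what happens, giving three long $4$-cliques, not six. This is also why your intermediate assertion that $s'=\ell'=6$ is achieved by $\mathbbm{W}(F_4)$ is wrong (it would make $|\mathbbm{W}(F_4)|=48$); the correct value is $s'=\ell'=3$. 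Your inference ``no long vertex is adjacent to three of the $x_i$, hence the long $4$-clique containing $y_{i,j}$ is adjacent to at most two vertices of $X$'' is a non sequitur: different members of the $4$-clique may be adjacent to different pairs of the $x_i$.

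Your ``safe route'' is correct and is all that is needed: twelve distinct long vertices require at least $\lceil 12/4\rceil=3$ long $4$-cliques, so $\ell'\geqslant 3$ and $|\Gamma|=8\ell'\geqslant 24$. Drop the faulty detour and the proof is complete.
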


Since $|\mathbbm{W}(F_4) | = 24$ we see that, in a sense, $\mathbbm{W}(F_4)$
is maximally tight among the graphs that are locally like $\mathbbm{W}(F_4)$.
There are several further properties of a graph, for instance its diameter,
that describe its tightness. The following notion of tight connectedness is
another way to express tightness of a bichromatic graph.

\begin{definition}
  A bichromatic graph is said to be {\em tightly connected} if every long
  vertex has a neighbor in every short component and vice versa.
\end{definition}

These three notions of tightness, however, are not local in nature (where a
local property is meant to be one which can be expressed in terms of the
neighbors of each vertex). In order to find a more local notion to
describe the tightness of $\mathbbm{W}(F_4)$ we investigate the relation of
vertices at distance $2$. The following is straightforward to check.

\begin{proposition}
  \label{thm-wf4-verticesatdistance2}Let $\Gamma$ be locally like
  $\mathbbm{W}(F_4)$, and let $x, y \in \Gamma$ be at distance $2$.
  \begin{itemize}
    \item If $x, y$ are both short (respectively long) vertices then $\{x,
    y\}^{\perp} \cong \mu (x, y) \cdot K^{\ell}_1$ (respectively $K_1^s$) for
    some $\mu (x, y) \in \{1, 2, 3\}$.
    
    \item If $x, y$ are of mixed type then $\{x, y\}^{\perp} \cong \mu_s (x,
    y) \cdot K_2^s  \sqcup  \mu_{\ell} (x, y) \cdot K_2^{\ell}$ for some
    $\mu_s (x, y) + \mu_{\ell} (x, y) \in \{1, 2\}$.
  \end{itemize}
\end{proposition}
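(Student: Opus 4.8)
The plan is to deduce all four assertions directly from the structure of the short and long local graphs $\Delta_s(\Gamma)\cong\mathbbm{W}(B_3)$ and $\Delta_{\ell}(\Gamma)\cong\mathbbm{W}(C_3)$, so the first step is to record that structure. From the description of $\mathbbm{W}(B_n)$ one checks the following facts, abbreviated $(\ast)$ below: in $\mathbbm{W}(B_3)$ the three short vertices form a triangle, the six long vertices form three disjoint edges $\{y_{i,j},y_{j,i}\}$, each short vertex is adjacent to the two endpoints of exactly one of these long edges and to no further long vertex, and each long vertex is adjacent to exactly one short vertex; dually, $\mathbbm{W}(C_3)$ is obtained by interchanging the words ``short'' and ``long''. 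I will also use that an isomorphism of bichromatic graphs preserves the short/long partition, together with the short--long duality for graphs locally like $\mathbbm{W}(F_4)$ recorded after Proposition~\ref{thm-wf4-partitioninto4cliques}; this lets me treat only one of the two monochromatic subcases and only one orientation of the mixed subcase.

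For the first bullet let $x,y$ be both short and at distance $2$, and let $z$ be a common neighbour. If $z$ were short, then $z^{\perp}\cong\mathbbm{W}(B_3)$ would contain the non-adjacent short pair $x,y$, against $(\ast)$; so every common neighbour of $x$ and $y$ is long. If two common neighbours $z_1,z_2$ were adjacent, then inside $z_1^{\perp}\cong\mathbbm{W}(C_3)$ the long vertex $z_2$ would be adjacent to the non-adjacent short vertices $x,y$, whereas by $(\ast)$ the short vertices adjacent to a given long vertex of $\mathbbm{W}(C_3)$ are the two endpoints of a single short edge and hence adjacent. Therefore $\{x,y\}^{\perp}$ is a coclique of long vertices, say $\mu(x,y)\cdot K_1^{\ell}$; as it lies inside $x^{\perp}\cong\mathbbm{W}(B_3)$ as an independent set among the long vertices --- which form three disjoint edges --- we get $\mu(x,y)\leqslant 3$, and $\mu(x,y)\geqslant 1$ since $x$ and $y$ are at distance $2$. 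The case of two long vertices follows by the short--long interchange.

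For the second bullet let $x$ be short, $y$ long, at distance $2$. The short common neighbours lie in $x^{\perp}\cong\mathbbm{W}(B_3)$ as short vertices, so by $(\ast)$ they are pairwise adjacent, and they contain no triangle, since three of them would be a triangle of short vertices inside $y^{\perp}\cong\mathbbm{W}(C_3)$, which has none; hence they form a clique of size at most $2$, and dually so do the long common neighbours. No short common neighbour $z$ is adjacent to a long common neighbour $w$: otherwise, in $z^{\perp}\cong\mathbbm{W}(B_3)$ the long vertex $w$ has $x$ as its unique short neighbour and, since $w\perp y$ with $y$ long, $y$ as its unique long neighbour, so the two long neighbours of the short vertex $x$ are $w$ and its partner $y$, forcing $x\perp y$ --- a contradiction. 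Finally the short common neighbours do not number exactly $1$: if $z$ were the only one, then in $z^{\perp}\cong\mathbbm{W}(B_3)$ the long vertex $y$ is not among the two long neighbours of $x$ (as $x\not\perp y$), so the unique short neighbour $z'$ of $y$ satisfies $z'\neq x$; since $z'\perp x$ (short vertices of $\mathbbm{W}(B_3)$ are mutually adjacent), $z'\perp y$, and $z'\in z^{\perp}$ gives $z'\neq z$, this $z'$ is a second short common neighbour --- a contradiction. Dually the long common neighbours number $0$ or $2$, so $\{x,y\}^{\perp}\cong\mu_s\cdot K_2^s\sqcup\mu_{\ell}\cdot K_2^{\ell}$ with $\mu_s,\mu_{\ell}\in\{0,1\}$ and not both zero, whence $\mu_s+\mu_{\ell}\in\{1,2\}$. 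The only step carrying real content is this last one --- the pairing-up of the common neighbours of a mixed pair into monochromatic edges; everything else is routine bookkeeping with the two nine-vertex local graphs.
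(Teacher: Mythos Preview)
Your argument is correct: every step is a direct read-off from the explicit nine-vertex structures of $\mathbbm{W}(B_3)$ and $\mathbbm{W}(C_3)$, exactly as your description $(\ast)$ records, and the pairing-up argument for the mixed case is clean. The paper gives no proof at all for this proposition---it simply declares it ``straightforward to check''---so your write-up is a fleshed-out version of precisely the intended verification rather than a different approach.
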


For the Weyl graph $\mathbbm{W}(F_4)$ the parameters $\mu, \mu_s,
\mu_{\ell}$ defined in Proposition
\ref{thm-wf4-verticesatdistance2} are constant and take the maximum possible
values $\mu = 3$ and $\mu_s = \mu_{\ell} = 1$ which is another, more local,
instantiation of the tightness of $\mathbbm{W}(F_4)$. Notice that the condition
$\mu_s = \mu_{\ell} = 1$ is equivalent to the contraction $\Gamma / \Pi$,
studied in Proposition \ref{thm-wf4-contractionlocally}, being
locally homogeneous with $\Delta_s (\Gamma / \Pi) \cong \overline{K_3}^{\ell}$
and $\Delta_{\ell} (\Gamma / \Pi) \cong \overline{K_3}^s$.

The following theorem summarizes our recognition results for the Weyl graph
$\mathbbm{W}(F_4)$. Note that all of the provided conditions under which a
graph $\Gamma$ is almost recognized as $\mathbbm{W}(F_4)$ are statements which
describe the tightness of $\Gamma$.

\begin{theorem}
  \label{thm-recognizewf4}Let $\Gamma$ be a connected bichromatic graph that
  is locally like $\mathbbm{W}(F_4)$. Assume that
  \begin{itemize}
    \item $| \Gamma | = 24$, or
    
    \item $\Gamma$ is tightly connected, or
    
    \item $\Gamma$ has diameter $2$, or
    
    \item $\mu = 3$.
  \end{itemize}
  If one of these conditions holds then $\Gamma$ is isomorphic to
  $\mathbbm{W}(F_4)$ or to its twisted copy $\mathbbm{W}(F_4)^\prime$. In particular, $\tmop{Aut}
  (\Gamma) \cong W (F_4) / Z$ where $Z$ denotes the center of $W (F_4)$.
\end{theorem}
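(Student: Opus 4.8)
The plan is to reduce all four hypotheses to a single structural statement about the contraction $\Gamma/\Pi$, namely that it is connected, bipartite of bivalency $6$, and locally homogeneous with $\Delta_s(\Gamma/\Pi)\cong\overline{K_3}^\ell$ and $\Delta_\ell(\Gamma/\Pi)\cong\overline{K_3}^s$; equivalently $\mu_s=\mu_\ell=1$ everywhere, equivalently $\Gamma/\Pi$ is a $6$-valent graph containing no strongly connected pair. First I would check that each of the four listed conditions forces exactly this. For $\mu=3$: by Proposition \ref{thm-wf4-verticesatdistance2}, if two short vertices $x,y$ at distance $2$ have $\mu(x,y)=3$ then their $4$-cliques $X,Y$ meet in three common long neighbors among the $y_{i,j}$, which by the counting in Proposition \ref{thm-wf4-relationof4cliques} and Proposition \ref{thm-wf4-contractionlocally} is incompatible with a strongly connected pair existing, and it pins down the local structure of $\Gamma/\Pi$. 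For the tight connectedness and diameter $2$ hypotheses I would argue similarly that a strongly connected pair $X,Y$ together with the twist move described at the end of section \ref{4.1} would, if iterated, either disconnect things or increase the diameter, contradicting the hypothesis; and for $|\Gamma|=24$ I would use Proposition \ref{thm-wf4-nrofvertices} together with the bivalency-$6$ bipartite structure to see that $\Gamma/\Pi$ has exactly $3+3=6$ vertices and must be $K_{3,3}$, which is $6$-valent with no strongly connected pair.

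Next I would identify which connected bipartite $6$-valent locally-$\overline{K_6}$ graphs can occur as $\Gamma/\Pi$. Once $\Gamma/\Pi$ is known to be locally $\overline{K_6}$ (each vertex's six neighbors pairwise non-adjacent), I claim it is forced to be $K_{3,3}$, equivalently that $\Gamma/\Pi$ has only six vertices. Here I would use the local structure of $\Gamma$ more carefully: the short local graph $\mathbbm{W}(B_3)$ has a specific shape, and reading off how the $4$-cliques adjacent to a fixed short $4$-clique $X$ are themselves interconnected should show that any long neighbor of $X$ has all three of its own long-type "partners at distance $2$" already among the neighbors of $X$; iterating connectedness then closes up after six vertices. (If $|\Gamma|=24$ this step is immediate from the vertex count; the content is handling the other three hypotheses, where a priori $\Gamma$ could be large.)

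With $\Gamma/\Pi\cong K_{3,3}$ in hand, the final step is to enumerate the bichromatic graphs $\Gamma$ with that contraction and the prescribed local graphs. By Lemma \ref{thm-wf4-fromlocN6} such $\Gamma$ arise from choices of injections $x^\perp\to\binom{4}{2}$ for each vertex $x$ of $K_{3,3}$, subject to the compatibility forced by $\mathbbm{W}(F_4)$-like local structure; since no two vertices of $K_{3,3}$ are strongly connected, all nine edges get honest $2$-subsets and the freedom is a gauge action of $(S_4)^6$ (relabelling the $4$-clique at each vertex) on these choices. I would show that modulo this gauge action there are exactly two orbits, corresponding to $\mathbbm{W}(F_4)$ and $\mathbbm{W}(F_4)^\prime$: the obstruction to trivializing the cocycle living on the edges of $K_{3,3}$ is governed by $H^1$ of the $6$-cycle-free part and leaves one binary invariant, which is precisely whether or not a twist along some "square" of $K_{3,3}$ has been applied. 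Finally, $\mathbbm{W}(F_4)$ is the commuting graph on the reflections of $W(F_4)$, on which $W(F_4)$ acts by conjugation with kernel exactly the center $Z$, and this action is visibly transitive on short vertices and on long vertices with the right stabilizers, so $\mathrm{Aut}(\mathbbm{W}(F_4))\cong W(F_4)/Z$; the same computation applies verbatim to the twisted copy, giving the last sentence.

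The main obstacle I expect is the middle step — proving $\Gamma/\Pi\cong K_{3,3}$ from the three non-cardinality hypotheses, i.e. ruling out larger $6$-valent bipartite locally-$\overline{K_6}$ graphs (such as the $C_k\times C_m\times C_n$ examples from Corollary \ref{thm-infinitelymanygraphslocallylikewf4}) using tightness rather than size. The examples in that corollary show $6$-valent bipartite locally-$\overline{K_6}$ graphs are abundant, so the argument must genuinely exploit that tight connectedness / diameter $2$ / $\mu=3$ fails for all of them, presumably via a diameter or common-neighbor estimate in $\Gamma$ itself transported down to $\Gamma/\Pi$. The $(S_4)^6$-cohomology bookkeeping in the last step is also slightly delicate but should be a finite, mechanical check once the contraction is nailed down.
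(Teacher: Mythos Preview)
Your proposal contains a fundamental misunderstanding of the contraction $\Gamma/\Pi$ for $\mathbbm{W}(F_4)$. You assert that the goal is to show $\Gamma/\Pi$ is $6$-valent with \emph{no} strongly connected pair, and later write ``since no two vertices of $K_{3,3}$ are strongly connected, all nine edges get honest $2$-subsets.'' This is backwards. For $\mathbbm{W}(F_4)$ itself the contraction has exactly $3+3$ vertices (there are $12$ short and $12$ long vertices, hence three short and three long $4$-cliques), so $\Gamma/\Pi\cong K_{3,3}$ is $3$-valent, not $6$-valent. Since the bivalency is $6$, \emph{every} edge of $K_{3,3}$ is doubled: all adjacent pairs are strongly connected. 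Indeed, the paper states explicitly that $\mu_s=\mu_\ell=1$ is equivalent to $\Gamma/\Pi$ being locally $\overline{K_3}$, not $\overline{K_6}$; and the very construction of the twisted copy $\mathbbm{W}(F_4)'$ at the end of Section~\ref{4.1} presupposes a strongly connected pair. Consequently your reduction step, your enumeration step (the $(S_4)^6$ gauge argument over nine ``honest'' edges), and your identification of the obstacle (ruling out locally-$\overline{K_6}$ graphs) are all built on the wrong target.

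The paper's actual route is more pedestrian and avoids the contraction almost entirely after Proposition~\ref{thm-wf4-contractionlocally}. For tight connectedness and for diameter $2$ it argues directly, in a few lines each, that the $12$ long vertices $y_{i,j}$ attached to a fixed short $4$-clique already exhaust all long vertices of $\Gamma$, whence $|\Gamma|=24$; the $\mu=3$ case is handled similarly in the thesis. Everything thus reduces to the $|\Gamma|=24$ case, which is settled not by a cohomological classification but by an explicit hand enumeration of how the remaining two short $4$-cliques can attach to the $y_{i,j}$, yielding exactly two candidates $\Gamma_1,\Gamma_2$; a SAGE computation then verifies that these are non-isomorphic, that $\Gamma_1\cong\mathbbm{W}(F_4)$, and that both have automorphism group $W(F_4)/Z$. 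If you want to salvage your more conceptual approach, the correct statement to aim for is that the hypotheses force $\Gamma/\Pi$ to be locally $\overline{K_3}$ with every edge doubled, hence $K_{3,3}$; the final enumeration must then account for the strongly connected edges, where Lemma~\ref{thm-wf4-fromlocN6} assigns to each such edge a pair of complementary $2$-subsets rather than a single one.
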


We prove Theorem \ref{thm-recognizewf4} by a series of propositions. The proof of
the case $\mu = 3$ is similar in spirit to the previous ones. It is therefore omitted; the interested reader is referred to {\cite{straub-diploma}} for the details.

\begin{proposition}
  \label{thm-wf4order24}Let $\Gamma$ be a connected bichromatic graph that is
  locally like $\mathbbm{W}(F_4)$. If $| \Gamma | = 24$ then $\Gamma \cong
  \mathbbm{W}(F_4)$ or $\Gamma \cong \mathbbm{W}(F_4)^\prime$. Further, $\tmop{Aut}
  (\Gamma) \cong W (F_4) / Z$.
\end{proposition}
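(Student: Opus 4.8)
The plan is to pin down the contraction $\Gamma/\Pi$ completely and then to show that $\Gamma$ can be reconstructed from it in essentially only two ways. First I would determine $\Gamma/\Pi$. By Proposition~\ref{thm-wf4-nrofvertices} a graph of order $24$ that is locally like $\mathbbm{W}(F_4)$ has twelve short and twelve long vertices, so by Proposition~\ref{thm-wf4-partitioninto4cliques} its short and its long induced subgraphs are each a disjoint union of exactly three $4$-cliques, and therefore $\Gamma/\Pi$ has three short and three long vertices. By Proposition~\ref{thm-wf4-contractionlocally} the graph $\Gamma/\Pi$ is bipartite of bivalency $6$; since a vertex of $\Gamma/\Pi$ has at most three vertices of the opposite colour available, bivalency $6$ forces it to be adjacent to all three of them and strongly connected to each. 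Hence $\Gamma/\Pi$ is $K_{3,3}$ with all nine of its edges doubled, and in particular it is connected, as it must be.

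Next I would describe the join between two of these $4$-cliques. Let $X=\{x_1,x_2,x_3,x_4\}$ be a short $4$-clique and $Y$ a long one; by the previous step they are strongly connected. By Proposition~\ref{thm-wf4-relationof4cliques} each pair $\{x_i,x_j\}$ has exactly two common long neighbours $y_{i,j},y_{j,i}$; these are adjacent, so lie in a common long $4$-clique; $y_{i,j}$ is adjacent among $x_1,\dots,x_4$ to precisely $x_i$ and $x_j$; and $y_{i,j}\perp y_{k,l}$ forces $\{k,l\}=\{i,j\}$ or $\{k,l\}\cap\{i,j\}=\emptyset$. Since every vertex of $Y$ is a long neighbour of $X$, it follows that $Y=\{y_{i,j},y_{j,i},y_{k,l},y_{l,k}\}$ for a complementary pair $\{i,j\},\{k,l\}$ of $2$-subsets of $\{1,2,3,4\}$, and that the bipartite graph between $X$ and $Y$ is the disjoint union of the complete bipartite graph on $\{x_i,x_j\}\cup\{y_{i,j},y_{j,i}\}$ and the one on $\{x_k,x_l\}\cup\{y_{k,l},y_{l,k}\}$. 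Consequently each long $4$-clique induces a partition of $X$ into two pairs, the three long $4$-cliques inducing the three distinct such partitions; the symmetric statement holds with the roles of short and long exchanged.

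Then I would package the residual freedom and settle it by a finite check. Fix a labelling of the four vertices of each of the six $4$-cliques by $\{1,2,3,4\}$. By the previous step the graph $\Gamma$ is then completely described by: for each short $4$-clique a bijection between the three long $4$-cliques and the three pair-partitions of $\{1,2,3,4\}$, the symmetric datum for each long $4$-clique, and, for each of the nine clique-pairs, a choice of which of the two pairs of the short clique is joined to which of the two pairs of the long clique. The isomorphism class of $\Gamma$ is this datum taken modulo the group generated by relabelling the six $4$-cliques and by permuting the three short and the three long $4$-cliques. The core of the proof is the verification --- carried out after normalising the six labellings as far as this group permits --- that, up to equivalence, exactly two such data yield a graph locally like $\mathbbm{W}(F_4)$, namely the datum of $\mathbbm{W}(F_4)$ itself and that of the graph $\mathbbm{W}(F_4)^\prime$ obtained from it by the twisting operation of Section~\ref{4.1}; the one binary choice that survives the normalisation is precisely a twist. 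That $\mathbbm{W}(F_4)$ does arise in this way --- its short $4$-cliques realised by the reflections in the coordinate hyperplanes and by the two parity halves of the reflections in the half-sum roots, its long $4$-cliques indexed by the three partitions of the four coordinates into two pairs --- is a direct check in the standard realisation of $F_4$ in $\mathbbm{R}^4$. I expect this bookkeeping to be the main obstacle: the difficulty lies entirely in organising the normalisation so that no configuration is overlooked or counted twice.

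Finally, for the automorphism group, conjugation makes $W(F_4)$ act on its set of reflections, preserving commutation and root length; the kernel is the centre $Z$ of $W(F_4)$, which has order $2$, so $W(F_4)/Z$ embeds into $\tmop{Aut}(\mathbbm{W}(F_4))$ as a subgroup of order $|W(F_4)|/|Z|=576$. Conversely, every automorphism of $\mathbbm{W}(F_4)$ permutes the six $4$-cliques respecting colour and hence acts on the reconstruction datum above, so $\tmop{Aut}(\mathbbm{W}(F_4))$ is the stabiliser of that datum in the relabelling group; computing this stabiliser --- equivalently, dividing the order of the relabelling group by the orbit size found above --- gives order $576$, so the embedding is onto and $\tmop{Aut}(\mathbbm{W}(F_4))\cong W(F_4)/Z$. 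The identical computation applied to the datum of $\mathbbm{W}(F_4)^\prime$ gives $\tmop{Aut}(\mathbbm{W}(F_4)^\prime)\cong W(F_4)/Z$ as well.
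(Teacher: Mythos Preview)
Your plan is essentially the paper's own approach: reduce to a finite combinatorial classification of how the six $4$-cliques can be joined, normalise, and verify that exactly two graphs survive. The paper's coordinatisation is a bit more economical than yours---rather than labelling all six $4$-cliques and quotienting by $(S_4)^6\times S_3^2$, it fixes a single short $4$-clique $x_1,\dots,x_4$, names all twelve long vertices $y_{i,j}$ relative to it via Proposition~\ref{thm-wf4-relationof4cliques}, normalises the second short clique $x_5,\dots,x_8$ by hand, and is then left with exactly one binary choice in placing the third short clique $x_9,\dots,x_{12}$, giving two candidate graphs $\Gamma_1,\Gamma_2$. For the endgame---checking that $\Gamma_1\cong\mathbbm{W}(F_4)$, that $\Gamma_1\not\cong\Gamma_2$, and that both have automorphism group $W(F_4)/Z$---the paper does not argue by hand at all but defers to a SAGE computation, so your orbit--stabiliser outline for $\tmop{Aut}(\Gamma)$ is actually more conceptual than what the paper provides, though it too ultimately rests on the same finite check you flag as the main obstacle.
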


\begin{proof}
  As observed in Proposition \ref{thm-wf4-nrofvertices}, every graph that
  is locally like $\mathbbm{W}(F_4)$ has at least $12$ short and $12$ long
  vertices. $\Gamma$ therefore consists of exactly $12$ vertices of each type.
  
  Let $x_1, x_2, x_3, x_4$ be a short $4$-clique. Adopting the notation of
  Proposition \ref{thm-wf4-relationof4cliques}, let $y_{i, j}$ and $y_{j, i}$
  be the long vertices adjacent to both $x_i$ and $x_j$. The $y_{i, j}$ are
  $12$ distinct vertices and therefore constitute the long vertices of
  $\Gamma$. It follows from Proposition \ref{thm-wf4-relationof4cliques} that
  the three long $4$-cliques are given by $y_{i, j}, y_{j, i}, y_{k, l}, y_{l,
  k}$ for disjoint $\{i, j\}$ and $\{k, l\}$.
  
  Each of the remaining eight short vertices has exactly two long neighbors in
  each of the three long $4$-cliques. Let $x_5$ be one of remaining short
  vertices. The two neighbors of $x_5$ in a $4$-clique $y_{i, j}, y_{j, i},
  y_{k, l}, y_{l, k}$ are one of $y_{i, j}, y_{j, i}$ along with one of $y_{k,
  l}, y_{l, k}$. We ambiguously defined the vertices $y_{i, j}, y_{j, i}$ as
  the long vertices contained in $\{x_i, x_j \}^{\perp}$ so we may as well
  assume that $x_5$ is adjacent to $y_{i, j}$ and $y_{k, l}$ with $i < j$ and
  $k < l$. Let $x_6$ be the unique short vertex also adjacent to $y_{1, 2},
  y_{3, 4}$. Likewise, let $x_7$ be the short vertex also adjacent to $y_{1,
  3}, y_{2, 4}$, and $x_8$ the short vertex also adjacent to $y_{1, 4}, y_{2,
  3}$. By construction, $x_5, x_6, x_7, x_8$ is a $4$-clique. Notice that for
  instance $x_5, x_6 \in \{y_{1, 2}, y_{3, 4} \}^{\perp}$ implies that $x_7,
  x_8 \in \{y_{2, 1}, y_{4, 3} \}^{\perp}$. Altogether this determines the
  induced subgraph on $x_1, x_2, \ldots, x_8$ along with the vertices $y_{i,
  j}$.
  
  Let $x_9, x_{10}, x_{11}, x_{12}$ be the remaining short $4$-clique. We may
  assume that $x_9, x_{10}$ are the short vertices contained in $\{y_{1, 2},
  y_{4, 3} \}^{\perp}$. Accordingly, $x_{11}, x_{12} \in \{y_{2, 1}, y_{3, 4}
  \}^{\perp}$. We may also assume that $x_9$ is contained in $\{y_{1, 3},
  y_{4, 2} \}^{\perp}$ (because if both $x_9$ and $x_{10}$ were not contained
  in $\{y_{1, 3}, y_{4, 2} \}^{\perp}$ then both $x_{11}, x_{12} \in \{y_{1,
  3}, y_{4, 2} \}^{\perp}$ which contradicts $x_{11}, x_{12} \in \{y_{2, 1},
  y_{3, 4} \}^{\perp}$). Further, we may assume that $x_{11}$ is the second
  short vertex contained in $\{y_{1, 3}, y_{4, 2} \}^{\perp}$. Consider the
  two short vertices in $\{y_{1, 4}, y_{3, 2} \}^{\perp}$. These can be either
  $x_9, x_{12}$ or $x_{10}, x_{11}$, and either choice determines $\Gamma$.
  Denote with $\Gamma_1$ the graph corresponding to the choice $x_9,
  x_{12} \in \{y_{1, 4}, y_{3, 2} \}^{\perp}$, and with $\Gamma_2$ the
  graph corresponding to the choice $x_{10}, x_{11} \in \{y_{1, 4}, y_{3, 2}
  \}^{\perp}$. The following table summarizes adjacency involving the vertices
  $x_9, x_{10}, x_{11}, x_{12}$.
  
  \begin{center}
    \begin{tabular}{lll}
      by construction & $x_9, x_{10} \perp y_{1, 2}, y_{4, 3}$ & $x_{11},
      x_{12} \perp y_{2, 1}, y_{3, 4}$\\
      & $x_9, x_{11} \perp y_{1, 3}, y_{4, 2}$ & $x_{10}, x_{12} \perp y_{3,
      1}, y_{2, 4}$\\
      $\Gamma_1$ & $x_9, x_{12} \perp y_{1, 4}, y_{3, 2}$ & $x_{10},
      x_{11} \perp y_{4, 1}, y_{2, 3}$\\
      $\Gamma_2$ & $x_9, x_{12} \perp y_{4, 1}, y_{2, 3}$ & $x_{10},
      x_{11} \perp y_{1, 4}, y_{3, 2}$
    \end{tabular}
  \end{center}
  
  An implementation in the computer algebra system SAGE, see {\cite{sage28}},
  of the graphs $\Gamma_1$ and $\Gamma_2$ can be found in the
  appendix of {\cite{straub-diploma}}. In particular, it is verified that
  $\Gamma_1$ and $\Gamma_2$ are non-isomorphic, that the automorphism
  group of both graphs is isomorphic to $W (F_4) / Z$, and that $\Gamma_1$
  is isomorphic to $\mathbbm{W}(F_4)$. Accordingly, $\Gamma_2$
  is isomorphic to the twisted copy $\mathbbm{W}(F_4)^\prime$.
\end{proof}

\begin{proposition}
  \label{thm-wf4tightlyconnected}Let $\Gamma$ be a connected bichromatic graph
  that is locally like $\mathbbm{W}(F_4)$. If $\Gamma$ is tightly
  connected then $\Gamma \cong \mathbbm{W}(F_4)$ or
  $\Gamma \cong \mathbbm{W}(F_4)^\prime$.
\end{proposition}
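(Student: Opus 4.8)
The plan is to reduce the statement to Proposition \ref{thm-wf4order24} by showing that a connected, tightly connected graph $\Gamma$ which is locally like $\mathbbm{W}(F_4)$ must satisfy $|\Gamma| = 24$; once that is established, the cited proposition immediately gives $\Gamma \cong \mathbbm{W}(F_4)$ or $\Gamma \cong \mathbbm{W}(F_4)^\prime$ (and also identifies the automorphism group, so nothing further is needed).

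To pin down the vertex count I would argue as follows. By Proposition \ref{thm-wf4-partitioninto4cliques} the subgraph of $\Gamma$ induced on the short vertices is a disjoint union of $4$-cliques, so every short $4$-clique is precisely a connected component of the short subgraph, i.e. a short component in the sense of the definition of tight connectedness. Now fix one such short $4$-clique $X = \{x_1, x_2, x_3, x_4\}$. Since $\Gamma$ is tightly connected, every long vertex of $\Gamma$ has a neighbor in $X$, hence is adjacent to some $x_i$. But Proposition \ref{thm-wf4-relationof4cliques} says that there are exactly $12$ long vertices adjacent to at least one of $x_1,\dots,x_4$, namely the vertices $y_{i,j}$. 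Therefore $\Gamma$ has precisely $12$ long vertices.

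Next I would invoke the short–long symmetry observed right after Proposition \ref{thm-wf4-partitioninto4cliques}: the graph obtained from $\Gamma$ by interchanging the roles of short and long vertices is again locally like $\mathbbm{W}(F_4)$, and it is again tightly connected. Applying the argument of the previous paragraph to it — fixing a long $4$-clique and using tight connectedness in the reverse direction — shows that $\Gamma$ also has exactly $12$ short vertices. Hence $|\Gamma| = 24$, and Proposition \ref{thm-wf4order24} completes the proof.

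I do not expect a genuine obstacle here: the whole content is the single observation that the rigid local configuration around one $4$-clique (Proposition \ref{thm-wf4-relationof4cliques}) already accounts for \emph{all} long vertices as soon as tight connectedness is assumed. The only points that need care are bookkeeping: confirming that "short component" really coincides with "short $4$-clique" in view of Proposition \ref{thm-wf4-partitioninto4cliques}, and checking that the tightly connected hypothesis is preserved under the short–long swap so that the symmetry argument is legitimate.
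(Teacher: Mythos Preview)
Your proposal is correct and follows essentially the same argument as the paper: fix a short $4$-clique, use tight connectedness together with Proposition~\ref{thm-wf4-relationof4cliques} to conclude there are exactly $12$ long vertices, argue symmetrically for the short vertices, and then invoke Proposition~\ref{thm-wf4order24}. The paper's proof is terser (it simply says ``Likewise'' for the short count), but your more explicit justification via the short--long swap is a fine elaboration of the same idea.
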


\begin{proof}
  Fix a short $4$-clique $x_1, x_2, x_3, x_4$. Because of tightness every long
  vertex is adjacent to one of the $x_i$, and by Proposition
  \ref{thm-wf4-relationof4cliques} there are exactly $12$ such long vertices.
  Thus $\Gamma$ consists of $12$ long vertices. Likewise, $\Gamma$ contains
  exactly $12$ short vertices. Hence, $| \Gamma | = 24$, and the claim follows
  from Proposition \ref{thm-wf4order24}.
\end{proof}

\begin{proposition}
  Let $\Gamma$ be a connected bichromatic graph that is locally like
  $\mathbbm{W}(F_4)$. If $\Gamma$ has diameter $2$ then $\Gamma \cong
  \mathbbm{W}(F_4)$ or $\Gamma \cong \mathbbm{W}(F_4)^\prime$.
\end{proposition}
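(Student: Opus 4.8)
The plan is to reduce the diameter‑$2$ hypothesis to the order‑$24$ case already treated in Proposition~\ref{thm-wf4order24}, i.e.\ to show that a connected, diameter‑$2$, bichromatic graph $\Gamma$ locally like $\mathbbm{W}(F_4)$ has exactly $24$ vertices, hence $12$ of each type. By Proposition~\ref{thm-wf4-nrofvertices} we already know $|\Gamma|\geqslant 24$ and that the two colour classes have equal size, so it suffices to produce $12$ short vertices from a single short $4$‑clique together with the diameter bound.

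The first step is to fix a short $4$‑clique $X=\{x_1,x_2,x_3,x_4\}$ and invoke Proposition~\ref{thm-wf4-relationof4cliques}: the long vertices adjacent to some $x_i$ are exactly the $12$ distinct vertices $y_{i,j}$, and they fall into three long $4$‑cliques $Y_{\{i,j\}}$ according to the pairing $\{i,j\}\leftrightarrow\{k,l\}$ with $\{i,j,k,l\}=\{1,2,3,4\}$. I would then argue that \emph{every} long vertex of $\Gamma$ is one of these $y_{i,j}$. Suppose $z$ is a long vertex not adjacent to any $x_i$. Since $\Gamma$ is connected and bichromatic, $z$ has a short neighbour $w$; if $w\notin X$ then $w$ is at distance $\geqslant 2$ from every $x_i$, but $w$ and $x_i$ are of the same type, so by Proposition~\ref{thm-wf4-verticesatdistance2} (the short–short case) $\{w,x_i\}^{\perp}$ consists of $\mu(w,x_i)\cdot K_1^{\ell}$; choosing $w$ to be a short neighbour of $z$, this common neighbourhood must contain a long vertex witnessing distance $2$ between $w$ and $x_i$. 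The key point to extract is that, tracing these common long neighbours, $z$ is forced to lie in some $x_i^{\perp}\cong\mathbbm{W}(B_3)$ — which contradicts $z\not\perp x_i$. More cleanly: take any short vertex $w$; since $d(w,x_1)\leqslant 2$, either $w\in X$ or $\{w,x_1\}^{\perp}$ contains a long vertex, which by Proposition~\ref{thm-wf4-relationof4cliques} (applied inside $x_1^{\perp}$) must be one of the $y_{1,j}$; running this over $x_1,x_2,x_3,x_4$ and using the local structure $\mathbbm{W}(B_3)$ pins down that the long neighbours of $w$ all lie among the $y_{i,j}$. Hence the long vertex set is exactly $\{y_{i,j}\}$, so $\Gamma$ has $12$ long vertices, and by Proposition~\ref{thm-wf4-nrofvertices} also $12$ short ones, giving $|\Gamma|=24$. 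Proposition~\ref{thm-wf4order24} then finishes: $\Gamma\cong\mathbbm{W}(F_4)$ or $\Gamma\cong\mathbbm{W}(F_4)^{\prime}$.

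The step I expect to be the main obstacle is making rigorous the claim that a long vertex at distance $2$ from the clique $X$ cannot exist — i.e.\ carefully converting the diameter‑$2$ hypothesis into the statement ``every short vertex $w$ has all its long neighbours among the $y_{i,j}$.'' The delicate point is that knowing $d(w,x_i)\leqslant 2$ only gives \emph{one} common long neighbour per index $i$, whereas $w$ has $6$ long neighbours ($w^{\perp}$ being locally like $\mathbbm{W}(B_3)$, with $12$ long neighbours split into its own three long $4$‑cliques — one must recount here). So one really has to use the local graph $\mathbbm{W}(B_3)$ at each $x_i$ to see that the long vertices in $\{w,x_i\}^{\perp}$ determine $w$'s position relative to the whole configuration, and then a short combinatorial count (or the bivalency‑$6$ structure of $\Gamma/\Pi$ from Proposition~\ref{thm-wf4-contractionlocally}) forces $w$ into the $4$‑clique structure spanned by $X$. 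Once that bookkeeping is done the argument collapses to the two previous propositions, so I would keep the write‑up short and lean on Propositions~\ref{thm-wf4-relationof4cliques} and~\ref{thm-wf4order24} as much as possible.
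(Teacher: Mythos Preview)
Your overall strategy---reduce to $|\Gamma|=24$ and invoke Proposition~\ref{thm-wf4order24}---matches the paper exactly. The gap is precisely where you flag it: the argument that there are only $12$ long vertices. Your route through short vertices (``every short $w$ has all its long neighbours among the $y_{i,j}$'') is never completed; as you yourself note, $d(w,x_i)\leqslant 2$ for each $i$ yields only a handful of shared long neighbours, not control over all six long neighbours of $w$, and the ``bookkeeping'' you defer is in fact the entire content of the step.

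The paper closes this gap by arguing about long vertices directly, and the observation you are missing is simple: the short neighbours of $x_1$ are exactly $x_2,x_3,x_4$, so any common neighbour of $x_1$ and a vertex not adjacent to any $x_i$ is automatically \emph{long}, hence lies in a determined long $4$-clique. Concretely, if $v$ is a long vertex outside $\{y_{i,j}\}$, then $v\not\perp x_i$ for all $i$, and diameter $2$ forces a long common neighbour of $v$ and $x_1$---say $y_{1,2}$. Thus $v$ sits in the long $4$-clique through $y_{1,2},y_{2,1}$, so that clique is not $\{y_{1,2},y_{2,1},y_{3,4},y_{4,3}\}$; by Proposition~\ref{thm-wf4-relationof4cliques} the $4$-clique through $y_{3,4},y_{4,3}$ must then be completed by two new long vertices $v_1,v_2\notin\{y_{i,j}\}$. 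Repeating the argument with $v_1$ and $x_1$, the long common neighbour would have to be one of $y_{1,j},y_{j,1}$ and simultaneously lie in $\{y_{3,4},y_{4,3},v_1,v_2\}$, which is impossible. This two-step contradiction replaces your unfinished short-vertex bookkeeping entirely.
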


\begin{proof}
  Let $x_1, x_2, x_3, x_4$ be a short $4$-clique. As in Proposition
  \ref{thm-wf4-relationof4cliques} let $y_{i, j}, y_{j, i}$ be the long
  vertices adjacent to both $x_i$ and $x_j$. Assume that there is a long
  vertex $v$ which is not among the $12$ long vertices $y_{i, j}$. Because $v$
  is not adjacent to any of the $x_i$ and since the diameter of $\Gamma$ is
  $2$, we find a long vertex that connects $x_1$ and $v$. Without loss of
  generality let this long vertex be $y_{1, 2}$. This prevents $y_{1, 2},
  y_{2, 1}, y_{3, 4}, y_{4, 3}$ from forming a long $4$-clique. By Proposition
  \ref{thm-wf4-relationof4cliques} there are thus long vertices $v_1, v_2$ not
  among the $y_{i, j}$ such that $y_{3, 4}, y_{4, 3}, v_1, v_2$ form a long
  $4$-clique. Again, $v_1$ is not adjacent to any of the $x_i$ and hence is
  connected to $x_1$ by a long vertex. This is a contradiction since the long
  vertices adjacent to $x_1$ are the vertices $y_{1, j}$, $y_{j, 1}$.
  
  Consequently, $\Gamma$ contains no further long vertices besides the $12$
  vertices $y_{i, j}$, and hence, by Proposition \ref{thm-wf4-nrofvertices},
  $| \Gamma | = 24$. Apply Proposition \ref{thm-wf4order24}.
\end{proof}

\section{\label{sct-applications}group-theoretic applications}

Our guiding example for the application of the local recognition of graphs in group theory is the
characterization of the symmetric groups by means of the structure of its
transposition centralizers; cf.\ {\cite[Theorem 27.1]{gorenstein-classification1}}. A detailed proof of \cite[Theorem 27.1]{gorenstein-classification1}
 using local recognition results for the Weyl graphs of type $A_n$ is contained in the third author's thesis
{\cite{straub-diploma}}; that proof runs along the lines of the proof of \cite[Theorem 1.2]{Cohen/Cuypers/Gramlich:2005}.
 An early example of such results
can be found in {\cite{mullineux-gt78}} which along with
{\cite{buekenhout-gt77}} has been one of the original motivations for the second
author to pursue the local recognition of Kneser graphs
in {\cite{hall-gt87}}. Another fundamental example for this theme is \cite{Pasechnik:1994}.
%
%
%
%
%

Likewise, the
recognition results for the Weyl graph of type $F_4$ discussed in the previous section
give rise to the following local characterization of the Weyl group $W(F_4)$. Again,
we refer to {\cite{straub-diploma}} for a proof inspired by \cite{Cohen/Cuypers/Gramlich:2005}.

\begin{theorem}
  \label{thm-ga-recognizewf4}Let $G$ be a group with non-conjugate involutions
  $x, y$ such that
  \begin{itemize}
    \item $C_G (x) = \langle x \rangle \times J$ with $J \cong W (B_3)$,
    
    \item $C_G (y) = \langle y \rangle \times K$ with $K \cong W (C_3)$,
    
    \item $x$ (respectively $y$) is a short (respectively long) root
    reflection in $K$ (respectively $J$),
    
    \item $J \cap K$ contains involutions $x_1, y_1$ such that $x_1$
    (respectively $y_1$) is a short (respectively long) root reflection in $K$
    as well as in $J$, and
    
    \item there are a long root reflection $y_0 \neq y, y_1$ in $J$ and a
    short root reflection $x_0 \neq x, x_1$ in $K$ such that $x_0$ and $y_0$
    commute.
  \end{itemize}
  If $G = \langle J, K \rangle$ then $G \cong W (F_4)$.
\end{theorem}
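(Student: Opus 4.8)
The plan is to reduce Theorem~\ref{thm-ga-recognizewf4} to the graph-theoretic recognition result Theorem~\ref{thm-recognizewf4} via a standard coset-geometry argument, in the same spirit as the proof of \cite[Theorem 1.2]{Cohen/Cuypers/Gramlich:2005}. First I would let $S$ denote the $G$-conjugacy class of $x$ and $L$ the $G$-conjugacy class of $y$; by hypothesis these are distinct, and by the structure of $C_G(x) \cong \langle x \rangle \times W(B_3)$ and $C_G(y) \cong \langle y \rangle \times W(C_3)$ the involutions in $S$ and $L$ play the role of short and long root reflections. Form the bichromatic commuting graph $\Gamma$ with vertex set $S \cup L$, short vertices $S$, long vertices $L$, and adjacency given by commutation. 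Because $\langle x \rangle \times J$ acts on the reflections of $J \cong W(B_3)$ exactly as $W(B_3)$ acts on its own reflections, the local graph of $\Gamma$ at $x$ is isomorphic (as a bichromatic graph) to $\mathbbm{W}(B_3)$, the short local graph of $\mathbbm{W}(F_4)$; dually, the local graph at $y$ is $\mathbbm{W}(C_3)$. Transitivity of $G$ on $S$ and on $L$ (conjugation) then forces $\Gamma$ to be locally like $\mathbbm{W}(F_4)$.

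Next I would establish that $\Gamma$ is connected. The subgroup $H := \langle J, K\rangle$ acts on $\Gamma$, and $H = G$ by hypothesis; since $J$ and $K$ are generated by reflections lying in $\Gamma$ and the vertices fixed by no conflict arise, connectivity follows once one checks that the orbit of $x$ under $G$ is already contained in one connected component---which it is, because $J$ moves $x$ within the component of $x$ (all reflections of $J$ commuting appropriately are in $x^\perp \cup \{x\}$-distance $\le 2$), $K$ moves $y_1 \in J \cap K$ within that same component, and $\langle J, K\rangle$ therefore acts on a single component, whence that component is all of $\Gamma$. The hypotheses about $x_1, y_1 \in J \cap K$ and about the commuting pair $x_0, y_0$ are exactly what is needed to glue the $J$-part and the $K$-part of $\Gamma$ together into one connected bichromatic graph rather than leaving short and long pieces disconnected.

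Now the key point: I would show $\Gamma$ satisfies one of the tightness hypotheses of Theorem~\ref{thm-recognizewf4}. The natural candidate is diameter $2$, or equivalently (via Proposition~\ref{thm-wf4-verticesatdistance2}) that $\mu = 3$; this should follow from the precise structure $C_G(x) = \langle x\rangle \times W(B_3)$, which pins down how many common neighbors two short vertices at distance $2$ have, forcing the maximal value. Granting this, Theorem~\ref{thm-recognizewf4} yields $\Gamma \cong \mathbbm{W}(F_4)$ or $\Gamma \cong \mathbbm{W}(F_4)'$, and hence $\mathrm{Aut}(\Gamma) \cong W(F_4)/Z$. Finally one identifies $G$ with a subgroup of $\mathrm{Aut}(\Gamma)$: the conjugation action $G \to \mathrm{Aut}(\Gamma)$ has kernel contained in $C_G(x) \cap C_G(y) \cap \bigcap(\cdots)$, which the centralizer hypotheses force to be trivial (no element centralizes all reflections of both $J$ and $K$ while $\langle J,K\rangle = G$), so $G$ embeds in $W(F_4)/Z$; a count using $|W(B_3)|$ and the structure of $C_G(x)$ shows $|G| \ge |W(F_4)|$, in fact $|G| = |W(F_4)/Z| \cdot |Z|$-type bookkeeping forces $G \cong W(F_4)$.

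The main obstacle I anticipate is not the graph recognition step---that is handed to us---but rather two group-theoretic bookkeeping points: first, verifying carefully that the local graph at a short vertex really is $\mathbbm{W}(B_3)$ as a \emph{bichromatic} graph (one must track which involutions of $\langle x\rangle \times W(B_3)$ are $G$-conjugate to $x$ versus to $y$, using the third and fourth hypotheses), and second, recovering $G$ itself rather than merely $\mathrm{Aut}(\Gamma)$---i.e., ruling out that $G$ is a proper subgroup of $W(F_4)/Z$ and correctly accounting for the central element. The hypothesis $G = \langle J, K\rangle$ together with $\mathrm{Aut}(\mathbbm{W}(F_4)) \cong W(F_4)/Z$ and the fact that the images of $J$ and $K$ in $\mathrm{Aut}(\Gamma)$ already generate a group of order $|W(F_4)/Z|$ should close this gap, but the precise argument for why the conjugation homomorphism is injective on $G$ (equivalently, why $Z(G)$ is trivial) needs the full strength of the centralizer descriptions and is where I would be most careful.
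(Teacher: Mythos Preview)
Your overall strategy---build the bichromatic commuting graph on the conjugacy classes of $x$ and $y$, verify it is locally like $\mathbbm{W}(F_4)$, establish a tightness condition so that Theorem~\ref{thm-recognizewf4} applies, and then recover $G$ from $\mathrm{Aut}(\Gamma)$---matches the paper exactly: no proof is given in the text, which instead refers to \cite{straub-diploma} for an argument modelled on \cite{Cohen/Cuypers/Gramlich:2005}, the very template you follow.

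There is, however, a concrete error in your endgame. You aim to show that the conjugation map $G \to \mathrm{Aut}(\Gamma)$ is injective, ``equivalently, \dots $Z(G)$ is trivial''. But the desired conclusion is $G \cong W(F_4)$, and $W(F_4)$ has center $Z$ of order $2$; this central involution commutes with every reflection and therefore lies in the kernel of the conjugation action on $\Gamma$. So injectivity is not what you want and cannot be what you prove. The correct bookkeeping runs the other way: the kernel of $G \to \mathrm{Aut}(\Gamma)$ is exactly $Z(G)$ (since $S \cup L$ generates $G = \langle J, K \rangle$), so $G/Z(G)$ embeds in $\mathrm{Aut}(\Gamma) \cong W(F_4)/Z$, and one checks that the images of $J$ and $K$ already generate the target, making this an isomorphism. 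It then remains to show $|Z(G)| = 2$ rather than $1$, and here the centralizer hypothesis does the work: in $W(F_4)/Z$ the centralizer of the image of a short root reflection is isomorphic to $W(B_3)$, not to $\langle \bar{x}\rangle \times W(B_3)$, so the quotient map $G \to G/Z(G)$ cannot be an isomorphism on $C_G(x)$. This forces $|Z(G)| = 2$ and, together with the prescribed centralizer structure, yields $G \cong W(F_4)$ rather than $W(F_4)/Z$ or some other extension.
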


The interest in group-theoretic local recognition
results like Theorem \ref{thm-ga-recognizewf4} stems from the classification of finite simple groups (outlined in
{\cite{gorenstein-classification1}}) and the fact that the majority of finite
simple groups arises from (twisted) Chevalley groups. These can be
defined as groups generated
by subgroups isomorphic to $\tmop{SL} (2, q)$ subject to certain relations by the Curtis--Tits theorem formulated as in {\cite{phan-gt70}}, {\cite{humphreys-gt72}}, {\cite{timmesfeld-gt04}}, by Phan's theorems \cite{Phan:1977}, \cite{Phan:1977b}, and by the Phan-type theorems \cite{Bennett/Gramlich/Hoffman/Shpectorov:2007}, \cite{Gramlich/Hoffman/Shpectorov:2003}, \cite{Gramlich/Witzel}. 
Recently, see
{\cite[Local recognition theorem~2]{gramlich-phan}}, Kristina Altmann and the first author proved a local
recognition result for Chevalley groups of (twisted) type $A_7$ and $E_6$ based on
results and techniques of {\cite{altmann-phd}}; this result makes serious use of the local
recognition of graphs that are locally the Weyl graph of type $A_5$ and of the Curtis--Tits theorem and Phan's theorems. We hope that our
analysis can help to approach a similar recognition result for Chevalley
groups of type $F_4$ based on the Phan-type theorem of type $F_4$ proved by Hoffman, M\"uhlherr, Shpectorov and the first author and published in \cite{Gramlich/Witzel}. For more details we refer to the thesis
{\cite{straub-diploma}} of the third author and the survey \cite{gramlich-phan} of the first author.


\begin{thebibliography}{BGHS07}
  \bibitem[Alt07]{altmann-phd}Kristina Altmann.
  {\newblock}\tmtextit{Centralisers of fundamental subgroups}. {\newblock}PhD
  thesis, Technische Universit\"at Darmstadt, 2007.
  
\bibitem[BGHS07]{Bennett/Gramlich/Hoffman/Shpectorov:2007}{Curtis D.\ Bennett, Ralf Gramlich, Corneliu Hoffman, Sergey Shpectorov}, {Odd-dimensional orthogonal groups as amalgams of unitary groups, part 1: general simple connectedness}, {\em J. Algebra} {\bf 312} (2007), 426--444.

  \bibitem[BC75]{brown-gt75}Morton Brown, Robert Connelly. {\newblock}On
  graphs with a constant link, II. {\newblock}\tmtextit{Discrete Mathematics} {\bf 11} (1975), 199--232.
  
  \bibitem[BH77]{buekenhout-gt77}Francis Buekenhout, Xavier Hubaut.
  {\newblock}Locally polar spaces and related rank 3 groups.
  {\newblock}\tmtextit{J. Algebra} {\bf 45} (1977), 391--434.
  
  \bibitem[Bou02]{bourbaki-lie4}Nicolas Bourbaki.
  {\newblock}\tmtextit{Elements of Mathematics. Lie Groups and Lie Algebras:
  Chapters 4--6}. {\newblock}Springer, 2002.
  
\bibitem[Coh90]{Cohen:1990} Arjeh M.\ Cohen, Local recognition of graphs, buildings, and related geometries, in {\em Finite geometries, buildings, and related topics} (William M.\ Kantor, Robert A.\ Liebler, Stanley E.\ Payne, Ernest E.\ Shult, eds., Oxford Science Publications, The Clarendon Press, New York, 1990), 85--94.

\bibitem[CCG05]{Cohen/Cuypers/Gramlich:2005} Arjeh M.\ Cohen, Hans Cuypers, Ralf Gramlich, Local recognition of non-incident point-hyperplane graphs, {\em Combinatorica} {\bf 25} (2005), 271--296.

\bibitem[CS90]{Cohen/Shult:1990} Arjeh M.\ Cohen, Ernest E.\ Shult, Affine polar spaces, {\em Geom.\ Dedicata} {\bf 35} (1990), 43--76.

\bibitem[CP92]{Cuypers/Pasini:1992} Hans Cuypers, Antonio Pasini, Locally polar geometries with affine planes, {\em European J.\ Combin.} {\bf 13} (1992), 39--57.

  
  
  \bibitem[GLS94]{gorenstein-classification1}Daniel Gorenstein, Richard Lyons,
  Ronald Solomon. {\newblock}\tmtextit{The Classification of the Finite
  Simple Groups}. {\newblock}AMS, 1994.
  
  
  
  \bibitem[Gra]{gramlich-phan}Ralf Gramlich. {\newblock}Developments in
  finite Phan theory. {\newblock} {\em Innov. Incidence Geom.}, to appear.
  
\bibitem[GHS03]{Gramlich/Hoffman/Shpectorov:2003}{Ralf Gramlich, Corneliu Hoffman, Sergey Shpectorov}, {A {Phan}-type theorem for {${\rm Sp}(2n,q)$}}, {\em J.\ Algebra} {\bf 264} (2003), {358--384}.

\bibitem[GW]{Gramlich/Witzel}Ralf Gramlich, Stefan Witzel, The sphericity of the Phan geometries of type $B_n$ and $C_n$ and the Phan-type theorem of type $F_4$, submitted, {\tt arXiv:0901.1156}.

  \bibitem[Hal80]{hall-gt80}Jonathan~I. Hall. {\newblock}Locally Petersen
  graphs. {\newblock}\tmtextit{J.\ Graph Theory} {\bf 4} (1980), 173--187.
  
  \bibitem[Hal85]{hall-gt85}Jonathan~I. Hall. {\newblock}Graphs with constant
  link and small degree or order. {\newblock}\tmtextit{J.\ Graph
  Theory} {\bf 8} (1985), 419--444.
  
  \bibitem[Hal87]{hall-gt87}Jonathan~I. Hall. {\newblock}A local
  characterization of the Johnson scheme. {\newblock}\tmtextit{Combinatorica} {\bf 7} (1987), 77--85.
  
 \bibitem[Har94]{harary-gt94}Frank Harary. {\newblock}\tmtextit{Graph
  Theory}. {\newblock}Westview Press, 1994.
  
  \bibitem[HS85]{hall-gt85b}Jonathan~I. Hall, Ernest~E. Shult.
  {\newblock}Locally cotriangular graphs. {\newblock}\tmtextit{Geometriae
  Dedicata} {\bf 18} (1985), 113--159.
  
  \bibitem[Hum72]{humphreys-gt72}James~E. Humphreys. {\newblock}Remarks on "A
  theorem on special linear groups". {\newblock}\tmtextit{Journal of Algebra} {\bf 22} (1972), 316--318.
  
  \bibitem[Hum92]{humphreys-gr92}James~E. Humphreys.
  {\newblock}\tmtextit{Reflection groups and Coxeter groups}.
  {\newblock}Cambridge University Press, 1992.
  
  \bibitem[Mul78]{mullineux-gt78}G.~Mullineux. {\newblock}A characterization
  of $A_n$ by centralizers of short involutions. {\newblock}\tmtextit{Quart.
  J. Math. Oxford Ser.} {\bf 29} (1978), 213--220.
  
\bibitem[Pas94]{Pasechnik:1994} Dmitrii V.\ Pasechnik, Geometric characterization of the sporadic groups $\mathrm{Fi}_{22}$, $\mathrm{Fi}_{23}$, and $\mathrm{Fi}_{24}$, {\em J.\ Combin.\ Theory Ser.\ A} {\bf 68} (1994), 100--114.

  \bibitem[Pha70]{phan-gt70}Kok-Wee Phan. {\newblock}A theorem on special
  linear groups. {\newblock}\tmtextit{Journal of Algebra} {\bf 16} (1970), 509--518.
  
\bibitem[Pha77a]{Phan:1977} {Kok{-}Wee Phan}, {On groups genererated by three-dimensional special unitary groups, {I}}, {\em J.\ Austral.\ Math.\ Soc.\ Ser.\ A} {\bf 23} (1977), {67--77}.

\bibitem[Pha77b]{Phan:1977b} {Kok{-}Wee Phan}, {On groups genererated by three-dimensional special unitary groups, {II}}, {\em J.\ Austral.\ Math.\ Soc.\ Ser.\ A} {\bf 23} (1977), {129--146}.


  \bibitem[SAG07]{sage28}\tmtextit{SAGE Mathematical Software, Version 2.8,
  http://www.sagemath.org}, 2007.
  
  \bibitem[Shu74]{shult-gt74}Ernest~E. Shult. {\newblock}Groups, polar spaces and
  related structures. {\newblock}In \tmtextit{Proc. Advanced Study Inst.,
  Breukelen, 1975}, volume Math. Centre Tracts, No. 57, pages 130--161, 1974.
  
  \bibitem[Str08]{straub-diploma}Armin Straub. {\newblock}Local recognition of
  reflection graphs on Coxeter groups. {\newblock}Master's thesis, Technische
  Universit\"at Darmstadt, 2008, {\tt arXiv:0805.2403}.
  
  \bibitem[Tim04]{timmesfeld-gt04}Franz~Georg Timmesfeld. {\newblock}The
  Curtis-Tits presentation. {\newblock}\tmtextit{Adv. Math.} {\bf 189} (2004), 38--67.

\bibitem[Wee94a]{Weetman:1994a} Graham Weetman, A construction of locally homogeneous graphs, {\em J.\ London Math.\ Soc.} {\bf 50} (1994), 68--86.

\bibitem[Wee94b]{Weetman:1994b} Graham Weetman, Diameter bounds for graphs extensions, {\em J.\ London Math.\ Soc.} {\bf 50} (1994), 209--221.
\end{thebibliography}
\end{document}